\documentclass[11pt,a4paper]{article}
\usepackage[left=3.00cm, right=3.00cm, bottom=2.00cm, top=3.00cm]{geometry}
\usepackage{amsmath, amsfonts, amsthm, amssymb, mathrsfs}
\usepackage{graphicx}
\usepackage{xcolor}
\newtheorem{teorema}{Theorem}
\newtheorem{lemma}[teorema]{Lemma}

\title{The Locating Rainbow Connection Number of the Edge Corona of a Graph with a Complete Graph}
\author{Ariestha Widyastuty Bustan$^{1,3}$\\ A.N.M. Salman$^{2}$ \\ Pritta Etriana Putri$^{2}$}
\date{}

\begin{document}

\maketitle

\begin{center}
	\textit{$^1$Doctoral Program of Mathematics, Faculty of Mathematics and Natural Sciences, Institut Teknologi Bandung, Bandung, Indonesia} \\
	\textit{$^2$Combinatorial Mathematics Research Group, Faculty of Mathematics and Natural Sciences, Institut Teknologi Bandung, Bandung, Indonesia} \\
	\textit{$^3$Mathematics Department, Faculty of Mathematics and Natural Sciences, Universitas Pasifik Morotai, Kabupaten Pulau Morotai, Indonesia} \\
\end{center}
\begin{center}
\texttt{30119004@mahasiswa.itb.ac.id} 
\end{center}

\begin{abstract}
A graph has a locating rainbow coloring if every pair of its vertices can be connected by a path passing through internal vertices with distinct colors and every vertex generates a unique rainbow code. The minimum number of colors needed for a graph to have a locating rainbow coloring is referred to as the locating rainbow connection number of a graph. Let $G$ and $H$ be two connected, simple, and undirected graphs on disjoint sets of $|V(G)|$ and $|V(H)|$ vertices, $|E(G)|$ and $|E(G)|$ edges, respectively. For $j\in\{1,2,...,|E(G_m)|\}$, the edge corona of $G_m$ and $H_n$, denoted as $G_m \diamond H_n$, is constructed by using a single copy of $G_m$ and $E(G_m)$ copies of $H_n$, and then connecting the two end vertices of the $j$-th edge of $G_m$ to every vertex in the $j$-th copy of $H_n$. In this paper, we determine the upper and lower bounds of the locating rainbow connection number for the class of graphs resulting from the edge corona of a graph with a complete graph. Furthermore, we demonstrate that these upper and lower bounds are tight.
\end{abstract}

\section{Introduction}
In \cite{Bustan-20}, Bustan et.al provided the definition of the locating rainbow connection number of a graph. Let $G$ be a simple, connected, and undirected  graph. For $k \in \mathbb{N}$, a \textit{rainbow vertex k-coloring} of $G$ is a function $c: V(G) \longrightarrow \{1,2,...,k\}$ such that for every two vertices $u$ and $v$ in $V(G)$, there exists a $u-v$ path, whose internal vertices have different colors. A path of this type is referred to as a \textit{rainbow vertex path}. To simplify the writing in this research, the rainbow vertex path will be referred to as "RVP". The smallest positive integer $k$ that allows $G$ to have a \textit{rainbow vertex k-coloring} is known as \textit{the rainbow vertex connection number} of $G$, and denoted by $rvc(G)$. Several studies related to the $rvc$ of a graph can be found in \cite{2}-\cite{5}.
 
For each $i\in \{1,2,...,k\}$, let $R_i$ be the set of vertices with color $i$ and $\Pi=\{R_1,R_2,...,R_k\}$ be an ordered partition of $V(G)$. The rainbow code of a vertex $v\in V(G)$ with respect to $\Pi$ is defined as the $k$-tuple  
$rc_{\Pi}(v)=(d(v,R_1 ),d(v,R_2 ),...,d(v,R_k ))$.
If each vertex in $G$ possesses a unique rainbow code, then $c$ is referred to as a locating rainbow k-coloring of $G$. \textit{The locating rainbow connection number} of $G$, is defined as the smallest positive integer $k$ such that $G$ has a locating rainbow k-coloring, denoted as $rvcl(G)$.

Given that every locating rainbow coloring of a graph $G$ is also a rainbow vertex coloring of $G$, then we have the following inequality:

\begin{equation}\label{eq1} 
	rvcl(G)\geq rvc(G).
\end{equation}

Several results regarding $rvcl(G)$, discovered in \cite{Bustan-20}, \cite{Bustan1}, and $rvc(G)$ in \cite{krivelevich}, are required to prove the main theorems in this paper.

\begin{lemma} \cite{krivelevich}
	Let $c$ be the number of cut  vertices in a graph $G$. Then $rvc(G)\geq c$.
	\label{cutvertex}
\end{lemma}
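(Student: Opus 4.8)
\medskip
\noindent\textbf{Proof proposal.} The plan is to establish the slightly stronger fact that in \emph{every} rainbow vertex $k$-coloring of $G$ the $c$ cut vertices receive pairwise distinct colors; since this already forces $k \geq c$, the bound $rvc(G) \geq c$ follows at once.

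So I would fix an arbitrary rainbow vertex $k$-coloring of $G$ and assume, for a contradiction, that two distinct cut vertices $x$ and $y$ are assigned the same color. The key step is to produce two vertices $u, v \in V(G) \setminus \{x,y\}$ such that \emph{every} $u-v$ path in $G$ uses both $x$ and $y$. To build them I would exploit that $x$ is a cut vertex: $G-x$ has at least two components, hence some component $A$ of $G-x$ does not contain $y$; pick $u \in A$. Since $A$ is connected and $x$ has a neighbor in $A$, the set $A \cup \{x\}$ is connected; it avoids $y$, so it lies inside a single component $D$ of $G-y$, and in particular $u \in D$. As $y$ is also a cut vertex, $G-y$ has a component $D' \neq D$, and I would take $v \in D'$.

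It then remains to verify two separation properties. First, $u \in A$ while $v \in D'$, which is disjoint from $D \supseteq A \cup \{x\}$, so $v \notin A$ and $v \neq x$; thus $u$ and $v$ lie in different components of $G-x$, every $u-v$ path meets $x$, and $x$ is an internal vertex of each such path since $u,v \neq x$. Second, $u \in D$ and $v \in D'$ lie in different components of $G-y$, so likewise $y$ is an internal vertex of every $u-v$ path. Hence any $u-v$ path contains the two distinct internal vertices $x$ and $y$ with equal colors, so it is not a rainbow vertex path --- contradicting that the fixed coloring is a rainbow vertex coloring. Therefore all cut vertices receive distinct colors, so $k \geq c$ and $rvc(G) \geq c$.

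I expect the only delicate point to be the elementary bookkeeping with components in the middle step --- that $A$ can be chosen to miss $y$, that $A \cup \{x\}$ really is connected and sits inside a single component of $G-y$, and that $u$ and $v$ are indeed distinct from $x$ and from $y$ --- but nothing here is deep. Since this is exactly the cut-vertex lower bound of Krivelevich and Yuster, in the write-up I would state it and cite \cite{krivelevich}, with the argument above as the underlying justification.
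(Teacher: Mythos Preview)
Your argument is correct. The paper itself does not supply a proof of this lemma; it simply states the result and cites \cite{krivelevich}, which is precisely what you propose doing in your final sentence. The supporting argument you give---that any two cut vertices must receive distinct colors in a rainbow vertex coloring---is sound, and the component bookkeeping you flagged as potentially delicate all checks out (in particular, since $y$ lies in exactly one component of $G-x$, a component $A$ missing $y$ always exists; and $x$ has a neighbor in each component of $G-x$ because $G$ is connected, so $A\cup\{x\}$ is indeed connected).
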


\begin{lemma}\cite{Bustan-20}
 Let $w,z \in V(G)$ and $w\neq z$. Let $c$ be a locating rainbow coloring of $G$. If $d(w,y)=d(z,y)$ for all $y \in V(G)-\{w,z\}$, then $c(w)\neq c(z)$.
	\label{lemma1}
\end{lemma}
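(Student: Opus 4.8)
The plan is to argue by contradiction. Suppose, to the contrary, that $c(w)=c(z)$, and denote this common color by $i_0$. Let $\Pi=\{R_1,R_2,\dots,R_k\}$ be the ordered partition of $V(G)$ induced by $c$, where $R_i$ is the set of vertices colored $i$. I would then show that $w$ and $z$ produce the same rainbow code with respect to $\Pi$, i.e. $rc_{\Pi}(w)=rc_{\Pi}(z)$; since $w\neq z$, this contradicts the assumption that $c$ is a locating rainbow coloring, forcing $c(w)\neq c(z)$.

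To establish $rc_{\Pi}(w)=rc_{\Pi}(z)$, I would compare the two $k$-tuples coordinate by coordinate, using the identity $d(v,R_j)=\min_{y\in R_j} d(v,y)$, and split into two cases according to the index $j$. For $j=i_0$: both $w$ and $z$ belong to $R_{i_0}$, so $d(w,R_{i_0})=0=d(z,R_{i_0})$. For $j\neq i_0$: neither $w$ nor $z$ lies in $R_j$, hence $R_j\subseteq V(G)-\{w,z\}$, and the hypothesis $d(w,y)=d(z,y)$ for all $y\in V(G)-\{w,z\}$ applies to every element of $R_j$; taking the minimum over $y\in R_j$ gives $d(w,R_j)=d(z,R_j)$.

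Combining the two cases yields equality in every coordinate, so $rc_{\Pi}(w)=rc_{\Pi}(z)$ with $w\neq z$, which is the desired contradiction. The argument is short, and I do not expect a genuine obstacle; the only delicate point is the case split, namely making sure that when $j$ is not the common color of $w$ and $z$, the class $R_j$ avoids both vertices, so that the distance-equality hypothesis legitimately covers all of $R_j$ and can be passed through the minimum.
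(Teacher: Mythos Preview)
Your argument is correct and is exactly the natural proof: assume $c(w)=c(z)=i_0$, note $d(w,R_{i_0})=d(z,R_{i_0})=0$, and for $j\neq i_0$ use $R_j\subseteq V(G)\setminus\{w,z\}$ together with the hypothesis to pass the pointwise equality through the minimum, obtaining $rc_\Pi(w)=rc_\Pi(z)$ and contradicting the locating property. The paper itself does not supply a proof of this lemma---it is quoted from \cite{Bustan-20}---so there is no in-paper argument to compare against, but your proof is the standard one and would match what appears in the cited source.
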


\begin{teorema}\cite{Bustan1}
	Let $G$ be a connected graph of order $n \geq 3$. Then $rvcl(G)=n$ if and only if $G$ is isomorphic to complete graphs.
	\label{theoremkn}
\end{teorema}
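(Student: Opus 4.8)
The plan is to establish the two implications separately: the ``if'' direction by a direct distance computation inside $K_n$, and the ``only if'' direction through its contrapositive, by exhibiting a locating rainbow $(n-1)$-colouring of every connected non-complete graph on $n\geq 3$ vertices (which suffices, since $rvcl(H)\leq|V(H)|$ for every connected graph $H$).

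For the ``if'' direction, assume $G\cong K_n$. First I would record the bound $rvcl(H)\leq|V(H)|$ just mentioned: colouring all vertices of $H$ with distinct colours makes every path rainbow and makes each vertex the unique one having $0$ in its own coordinate, so this colouring is locating. For the matching lower bound on $K_n$, suppose $c$ were a locating rainbow $(n-1)$-colouring; by the pigeonhole principle some $u\neq v$ satisfy $c(u)=c(v)$. In $K_n$ a colour class lies at distance $0$ from a vertex exactly when it is that vertex's own class, and at distance $1$ otherwise; since $u$ and $v$ share a class this forces $rc_\Pi(u)=rc_\Pi(v)$, contradicting the locating property. Hence $rvcl(K_n)=n$.

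For the ``only if'' direction I would prove the contrapositive: if $G$ is connected, $|V(G)|=n\geq 3$, and $G\not\cong K_n$, then $rvcl(G)\leq n-1$. The construction assigns a common colour to one carefully chosen pair of vertices and pairwise distinct colours to all the others. Concretely, pick a non-cut vertex $a$ of $G$ (for instance a leaf of a spanning tree) together with a vertex $b\neq a$ for which $\{a,b\}$ is \emph{not} a pair of distance twins, that is, $d(a,y)\neq d(b,y)$ for some $y\in V(G)\setminus\{a,b\}$; Lemma \ref{lemma1} in fact forces such a choice, since two equally-coloured vertices of a locating colouring can never be distance twins. Let $\Pi$ use $n-1$ colours, with $\{a,b\}$ forming one class and each remaining vertex a singleton class. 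For the rainbow condition, the only repeated colour is the one shared by $a$ and $b$: if a pair $\{x,y\}$ meets $\{a,b\}$, say $x\in\{a,b\}$, then $x$ is an endpoint of any $x$--$y$ path and so among $a,b$ only the other vertex can be internal, whence at most one internal vertex carries the repeated colour; and if $x,y\notin\{a,b\}$, a path lying in the connected graph $G-a$ avoids $a$ entirely, so again at most one internal vertex (namely, possibly, $b$) carries it. For the locating condition, each vertex outside $\{a,b\}$ is the unique vertex with $0$ in its own coordinate, hence is distinguished from every other vertex, while $rc_\Pi(a)\neq rc_\Pi(b)$ because the coordinate indexed by the colour of $y$ records $d(a,y)\neq d(b,y)$.

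The step that needs genuine work is the existence of the vertex $b$, which amounts to showing that in a connected non-complete graph \emph{no} vertex is a distance twin of all the others. I would argue by contradiction: if $a$ were such a vertex, then for every neighbour $b$ of $a$ the equalities $d(a,y)=d(b,y)$ for all $y\neq a,b$ force $N[a]=N[b]$ (compare which vertices lie at distance $1$). Then $N[a]$ is a clique, and since $G$ is connected any edge leaving $N[a]$ would contradict $N[a]=N[b]$ for its endpoint $b\in N(a)$; hence $N[a]=V(G)$, and in turn $N[b]=V(G)$ for all vertices $b$, forcing $G\cong K_n$. I expect this neighbourhood bookkeeping, together with making the path-routing precise across the few pair types, to be the only real obstacle; the distance computations in $K_n$ and the final comparison of rainbow codes are routine.
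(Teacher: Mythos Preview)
This theorem is not proved in the present paper; it is quoted from \cite{Bustan1} and used as a black box, so there is no in-paper argument against which to compare your proposal.

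That said, your argument is correct. For the ``if'' direction, Lemma~\ref{lemma1} applied to any pair of vertices in $K_n$ already forces all colours to be distinct, matching the trivial upper bound. For the contrapositive of the ``only if'' direction, your construction works and can in fact be phrased even more cleanly: once $a$ is a non-cut vertex and the $n-1$ vertices of $V(G)\setminus\{a\}$ carry pairwise distinct colours, \emph{every} path whose internal vertices lie in $V(G)\setminus\{a\}$ is automatically rainbow; such a path exists between any pair (take any path when one endpoint is $a$, and a path in the connected graph $G-a$ otherwise). The locating step is exactly as you describe, and your closed-neighbourhood argument---that $N[a]=N[b]$ for every neighbour $b$ of $a$ makes $N[a]$ a clique equal to $V(G)$, whence $G\cong K_n$---is valid; note that you only invoke the twin hypothesis for neighbours of $a$, which already suffices.
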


\section{Main Results}
Let $G_m$ and $H_n$ be two graphs on disjoint sets of vertices $|V(G_m)|=m$ and $|V(H_n)|=n$, and edges $|E(G_m)|$ and $|E(H_n)|$, respectively. For $j\in\{1,2,...,|E(G_m)|\}$, the edge corona of $G_m$ and $H_n$, denoted as $G_m \diamond H_n$, is constructed by using a single copy of $G_m$ and $|E(G_m)|$ copies of $H_n$, and then connecting the two end vertices of the $j$-th edge of $G_m$ to every vertex in the $j$-th copy of $H_n$  \cite{definition}. In this research, $G_m$ is referred to as the "core" and $H_n$ as the "flare". For simplification, denote $\{n\in \mathbf{N} \mid  a \leq n\leq b\}$ by $[a,b]$. The term "\textit{entry}" denotes the distance between a vertex and a set of colors.

\subsection{Lower and Upper Bounds}

From the equation (\ref{eq1}), it can be concluded that $rvc(G)$ is one of the lower bounds for $rvcl(G)$. The same principle applies to the locating connection number of edge corona of any two graphs. Therefore, the first result we present in this subsection concerns $rvc( G_m \diamond H_n)$.

\begin{teorema}
	Let $m$ and $n$ be two integers where $m\geq2$ and $n\geq 2$. Let $G_m$ be a graph of order $m$ and $H_n$ be a graph of order $n$. Then, 
	\begin{center}
		$rvc(G_m \diamond H_n) \geq rvc(G_m)$.
	\end{center}
	\label{Lemmarvc}
\end{teorema}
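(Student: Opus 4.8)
The plan is to show that any rainbow vertex coloring of $G_m \diamond H_n$ restricts to a rainbow vertex coloring of the core $G_m$, so that the minimum number of colors for the former is at least that for the latter. Let $c$ be a rainbow vertex $k$-coloring of $G_m \diamond H_n$ with $k = rvc(G_m \diamond H_n)$, and let $c'$ be its restriction to $V(G_m)$. It suffices to prove that $c'$ is a rainbow vertex coloring of $G_m$; then $rvc(G_m) \le k$, which is the claim.

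First I would establish the key structural fact about distances in the edge corona: for any two vertices $u, v \in V(G_m)$, we have $d_{G_m \diamond H_n}(u,v) = d_{G_m}(u,v)$, and moreover every shortest $u$--$v$ path in $G_m \diamond H_n$ can be taken to lie entirely inside the core copy of $G_m$. The reason is that each flare copy of $H_n$ attached to the $j$-th edge $xy$ of $G_m$ only provides detours of the form $x \to (\text{vertex of } H_n) \to y$, which has length $2$ and hence is never shorter than the direct edge $xy$; so passing through a flare can never shorten a path between two core vertices, and any path using flare vertices can be shortcut to a path in $G_m$ of no greater length. I would prove this by induction on the length of a shortest path, or by a direct argument replacing each maximal flare excursion with the corresponding core edge.

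Given this fact, take any $u, v \in V(G_m)$. Since $c$ is a rainbow vertex coloring of $G_m \diamond H_n$, there is a rainbow vertex $u$--$v$ path $P$; but I want a rainbow path inside $G_m$. Here I would argue that a \emph{shortest} $u$--$v$ path $P^*$ in $G_m \diamond H_n$ is automatically a rainbow vertex path (its internal vertices must have distinct colors, else it could be shortened — this is the standard observation that geodesics are rainbow in any rainbow-vertex-connected graph), and by the structural fact $P^*$ can be chosen inside $G_m$. Then $P^*$ is a $u$--$v$ path in $G_m$ whose internal vertices, colored by $c'$, are all distinct. Hence $c'$ is a rainbow vertex coloring of $G_m$, giving $rvc(G_m) \le k = rvc(G_m \diamond H_n)$.

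The main obstacle I anticipate is the ``geodesics are rainbow'' step combined with choosing the geodesic inside the core: one must be careful that a shortest path in $G_m \diamond H_n$ between two core vertices really need not enter any flare, and that its internal vertices are forced to be distinctly colored. Both points are handled by the length-$2$-detour observation above, but stating them cleanly — rather than waving at ``obviously'' — is where the actual work lies; the rest is bookkeeping. An alternative that avoids the geodesic argument is to directly take any rainbow $u$--$v$ path and surgically replace each flare excursion $x \to h \to y$ by the edge $xy$, checking the result is still rainbow (the colors only decrease along the path), which may be the more robust route to write up.
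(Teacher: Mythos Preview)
Your primary argument contains a real error: the claim that ``geodesics are rainbow in any rainbow-vertex-connected graph'' is false, and the parenthetical justification (``else it could be shortened'') confuses distinct \emph{vertices} with distinct \emph{colors}. A concrete counterexample: take the graph on $\{u,a,b,v,c,d\}$ with edges $ua,\ ab,\ bv,\ uc,\ cd,\ dv,\ ac$, and color $u=v=c=1$, $a=b=d=2$. One checks directly that every pair of vertices is joined by some rainbow vertex path, so this is a valid rainbow vertex $2$-coloring; yet the geodesic $u\,a\,b\,v$ has internal vertices $a,b$ both colored $2$ and is not rainbow. So you cannot simply pick a shortest core path and declare it rainbow.

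Your alternative route, however, is correct and is essentially what the paper does. Given a rainbow $u$--$v$ path $P$ in $G_m \diamond H_n$ with $u,v\in V(G_m)$, every maximal flare segment on $P$ has the form $x,h_1,\dots,h_k,y$ with all $h_i$ in the single flare attached to the edge $xy$ (vertices in distinct flares are nonadjacent, and the two core vertices flanking the segment must both be endpoints of that edge); replacing each such segment by the edge $xy$ yields a $u$--$v$ path in $G_m$ whose internal vertices form a subset of those of $P$, hence are distinctly colored. This is exactly the paper's argument, stated there by contradiction and rather tersely as ``all paths connecting $u$ and $v$ in $G_m \diamond H_n$ must contain paths connecting $u$ and $v$ in the core.'' Write up that surgery argument and discard the geodesic step entirely.
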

\begin{proof}
	Suppose $rvc(G_m \diamond H_n) = rvc(G_m)-1$. Consequently, there are at least two vertices, $u$ and $v$, in the core that are not connected by an RVP. According to the definition of the edge corona operation, all paths connecting $u$ and $v$ in $G_m \diamond H_n$ must contain paths connecting $u$ and $v$ in the core. Therefore, there is no RVP connecting vertices $u$ and $v$ in $G_m \diamond H_n$, which leads to a contradiction. Thus, $rvc(G_m \diamond H_n) \geq rvc(G_m)$.
	
\end{proof}

Based on Lemma \ref{Lemmarvc} and Equation (\ref{eq1}), the following equation is obtained:

\begin{equation}\label{eq2} 
	rvcl(G_m\diamond H_n)\geq rvc(G_m).
\end{equation}

Additionally, besides providing general upper and lower bounds for $rvcl(G_m \diamond K_n)$, we also establish the lower bound for graph $G_m \diamond K_n$ by leveraging the completeness property of graph $K_n$. Before proceeding, we present a lemma pertaining to the $rvcl$ of a graph that contains a set of vertices equidistant to other vertices of a graph, as demonstrated in Lemma \ref{lemadjacent}.

\begin{lemma}
	Let $A$ and $B$ be two sets of vertices in graph $G$, where the vertices in each set are mutually adjacent and equidistant from the other vertices in graph $G$. If $|A|=|B|=t$, then $rvcl(G)\geq t+1$
	\label{lemadjacent}
\end{lemma}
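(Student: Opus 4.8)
The plan is to show that any locating rainbow coloring $c$ of $G$ must assign at least $t+1$ colors, by contradiction: suppose $c$ uses only $t$ colors, and derive a forbidden coincidence of rainbow codes. The key structural observation is that if $u,v$ are two vertices that are equidistant from every other vertex of $G$ and are mutually adjacent (so $d(u,v)=1$), then for \emph{any} ordered color partition $\Pi$ the two rainbow codes $rc_\Pi(u)$ and $rc_\Pi(v)$ can differ only in the coordinates corresponding to the colors $c(u)$ and $c(v)$: indeed, for every color class $R_i$ containing some vertex other than $u,v$, the distance from $u$ to $R_i$ equals the distance from $v$ to $R_i$, because $u$ and $v$ see all of $V(G)\setminus\{u,v\}$ at identical distances. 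This is exactly the mechanism behind Lemma \ref{lemma1}, which I will invoke.

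First I would apply Lemma \ref{lemma1} inside the set $A$. Any two distinct vertices $w,z\in A$ are mutually adjacent, hence each lies at distance $1$ from the other; and both are equidistant from every vertex outside $A$ (in particular outside $\{w,z\}$), and equidistant from every vertex of $A\setminus\{w,z\}$ as well since all of $A$ is mutually adjacent. So $d(w,y)=d(z,y)$ for all $y\in V(G)\setminus\{w,z\}$, and Lemma \ref{lemma1} forces $c(w)\neq c(z)$. Therefore the $t$ vertices of $A$ receive $t$ pairwise distinct colors; since only $t$ colors are available, $c$ restricted to $A$ is a bijection onto the full color set $\{1,2,\dots,t\}$. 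The identical argument applied to $B$ shows $c$ restricted to $B$ is also a bijection onto $\{1,2,\dots,t\}$.

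Next I would pick the vertex $a\in A$ and the vertex $b\in B$ that carry the \emph{same} color, say color $1$; such a pair exists because both restrictions are bijections onto the same color set. Now I claim $rc_\Pi(a)=rc_\Pi(b)$, contradicting that $c$ is locating. For the coordinate corresponding to $R_1$: since $a$ has color $1$ we get $d(a,R_1)=0$, and likewise $d(b,R_1)=0$. For every other coordinate $R_i$ with $i\neq 1$: the class $R_i$ contains exactly one vertex of $A$ and exactly one vertex of $B$ (by the bijection property), and in particular $R_i$ contains vertices different from both $a$ and $b$; since $a$ and $b$ are each equidistant from all vertices outside their own set and from the other vertices of their own set — and, crucially, $a$ is at distance $1$ from the $A$-representative of $R_i$ while $b$ is at distance $1$ from the $B$-representative of $R_i$, and at the common equidistant value from the single representative lying in the other set — one checks that $d(a,R_i)=d(b,R_i)$. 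So all coordinates agree, giving $rc_\Pi(a)=rc_\Pi(b)$, a contradiction. Hence $rvcl(G)\geq t+1$.

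The main obstacle is the bookkeeping in the last step: I must be careful that a color class $R_i$ may contain vertices from $A$, from $B$, and from the rest of $G$ all at once, and verify that each of these possible members contributes the same distance from $a$ as from $b$. The cleanest way is to split $V(G)\setminus\{a,b\}$ into three parts — $A\setminus\{a\}$, $B\setminus\{b\}$, and $V(G)\setminus(A\cup B\cup\{a,b\})$ — show $d(a,y)=d(b,y)$ holds on each part (using mutual adjacency within $A$ and within $B$, and the hypothesis that $A$ and $B$ are equidistant from everything else, plus $d(a,b)=$ the common value making the cross terms match), and then take the minimum over $R_i$ of these equal distances. Once that case analysis is done, the contradiction is immediate, and the bound $t+1$ follows.
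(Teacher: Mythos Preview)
Your approach is exactly the paper's: assume a locating rainbow coloring with only $t$ colors, use Lemma~\ref{lemma1} to force $c|_A$ and $c|_B$ to be bijections onto $\{1,\dots,t\}$, then match the vertex $a\in A$ and the vertex $b\in B$ carrying the same color and show their rainbow codes coincide. The paper phrases the last step by noting that every vertex of $A$ (and of $B$) has rainbow code $(1,\dots,1,0,1,\dots,1)$ with the $0$ in its own color's slot; you reach the same conclusion in your middle paragraph via the $A$- and $B$-representatives in each $R_i$.

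Your final ``obstacle'' paragraph, though, proposes a verification that is both unnecessary and would not go through. You do \emph{not} need $d(a,y)=d(b,y)$ for every individual vertex $y$; you only need $d(a,R_i)=d(b,R_i)$, and since $d(v,R_i)=\min_{y\in R_i} d(v,y)$, exhibiting one neighbor of $a$ inside $R_i$ (the $A$-representative) already gives $d(a,R_i)=1$, and symmetrically $d(b,R_i)=1$. The stronger pointwise equality is generally false under the lemma's hypotheses: for $y\in A\setminus\{a\}$ you have $d(a,y)=1$ while $d(b,y)=d(b,a)$ (by equidistance of $A$ from $b$), and nothing forces $d(a,b)=1$; for $y\notin A\cup B$ the hypotheses only say that all members of $A$ share the same distance to $y$, and all members of $B$ share the same distance to $y$, but give no relation between those two common values. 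So the three-part case split you sketch cannot be completed. Simply drop that paragraph and stop once you have observed that both codes equal $(1,\dots,1,0,1,\dots,1)$; that is precisely what the paper does.
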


\begin{proof}
	
	Suppose that $rvcl(G)=t$. Since $|A|=|B|=t$, based on Lemma \ref{lemma1}, we can color the vertices in sets $A$ and $B$ with $t$ different colors. Since $rvcl(G)=t$, there exists a vertex $x \in A$ colored with $k$ that is at a distance of $1$ from $R_i$ for $i\in [1,t],$ and $i\neq k$. The same applies to the vertices in set $B$. Consequently, there are $n$ pairs of vertices with the same rainbow code. Therefore, $rvcl(G)\geq t+1$.
\end{proof}

\begin{lemma}
	Let $m$ and $n$ be two integers where $m\geq2$ and $n\geq 2$. Let $G_m$ be a graph of order $m$ and $K_n$ be a complete graph of order $n$. Then, 
	\begin{center}
		$rvcl(G_m \diamond K_n) \geq n+1$.
	\end{center}
	\label{batasbawah}
\end{lemma}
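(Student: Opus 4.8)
The plan is to produce two vertex sets meeting the hypotheses of Lemma \ref{lemadjacent} with $t=n$. Write $N=G_m\diamond K_n$, and for the $j$-th edge $e_j=a_jb_j$ of $G_m$ let $A_j$ be the vertex set of the $j$-th attached copy of $K_n$. The first step is the structural observation that the $n$ vertices of $A_j$ are pairwise true twins in $N$: by the construction of the edge corona, every $x\in A_j$ has closed neighbourhood $A_j\cup\{a_j,b_j\}$, independent of $x$. From this I would deduce that the vertices of $A_j$ are mutually adjacent and equidistant from every vertex of $N$ outside $A_j$ — the short argument being that a geodesic from some $x\in A_j$ to an outside vertex $y$ must leave $A_j$ on its first edge, hence pass through $a_j$ or $b_j$, and replacing $x$ by any $x'\in A_j$ yields an $x'$–$y$ path of the same length, so $d_N(x,y)=d_N(x',y)$.

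Next I would split on the number of edges of $G_m$. If $G_m$ has at least two edges (which, under the standing assumption that $G_m$ is connected, is automatic once $m\geq 3$), I take $A=A_1$ and $B=A_2$ for two distinct edges. Then $A$ and $B$ are disjoint, $|A|=|B|=n$, each consists of mutually adjacent vertices, and by the previous step each is equidistant from all the remaining vertices of $N$; in particular every vertex of $A$ is equidistant from each vertex of $B$ and vice versa. Lemma \ref{lemadjacent} then gives $rvcl(N)\geq n+1$ at once. The only remaining possibility is $|E(G_m)|=1$, which for a connected $G_m$ of order $m\geq 2$ forces $m=2$ and $G_m\cong K_2$; in that case $N=K_2\diamond K_n$ is complete on $n+2$ vertices, so Theorem \ref{theoremkn} yields $rvcl(N)=n+2\geq n+1$. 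Combining the two cases completes the proof.

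I expect the only delicate point to be the equidistance claim for $A_j$: one must argue carefully that every shortest path leaving a copy of $K_n$ is routed through the two core endpoints of the associated edge, and that this routing (and hence the distance) does not depend on which vertex of the copy the path starts from. Once this true-twin property is in hand, the rest is bookkeeping plus a direct appeal to Lemma \ref{lemadjacent} (and, in the degenerate one-edge case, to Theorem \ref{theoremkn}).
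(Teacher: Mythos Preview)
Your proposal is correct and follows essentially the same approach as the paper: the $m=2$ case is handled via $G_2\diamond K_n\cong K_{n+2}$ and Theorem~\ref{theoremkn}, and for $m\geq 3$ two distinct flares supply the sets $A$ and $B$ of Lemma~\ref{lemadjacent} with $t=n$. The paper's proof merely asserts the equidistance property of flare vertices, whereas you justify it explicitly via the true-twin observation; otherwise the arguments are identical.
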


\begin{proof}
	
	For $m=2$, $G_2 \diamond K_n \equiv K_{n+2}$, and according to Theorem \ref{theoremkn}, we have $rvcl(G_2 \diamond K_n)= n+2$. For $m\geq 3$, we can deduce that there are at least two flares, each containing $n$ vertices that are mutually adjacent and equidistant from other vertices in $G_m \diamond K_n$. Based on Lemma \ref{batasumum}, we obtain $rvcl(G_m \diamond K_n)\geq n+1$.
\end{proof}

\begin{teorema}
	Let $m$ and $n$ be two integers where $m\geq2$ and $n\geq 2$. Let $G_m$ be a graph of order $m$, $K_n$ be a complete graph of order $n$, and $rvc(G_m)$ be an $rvc$ of graph $G_m$. Then,
	\begin{center}
		$max\{rvc(G_m),n+1\}\leq rvcl(G_m \diamond K_n) \leq m+n+|E(G_m)|-1$.
	\end{center}
	\label{batasumum}
\end{teorema}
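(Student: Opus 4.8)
The lower bound needs no new argument: Inequality~(\ref{eq2}) already gives $rvcl(G_m\diamond K_n)\geq rvc(G_m)$ and Lemma~\ref{batasbawah} gives $rvcl(G_m\diamond K_n)\geq n+1$, so $rvcl(G_m\diamond K_n)\geq\max\{rvc(G_m),n+1\}$. The real content is the upper bound, which I would prove by exhibiting a locating rainbow $k$-colouring $c$ with $k=m+n+|E(G_m)|-1$. First dispose of the degenerate case $m=2$: then $G_2\diamond K_n\cong K_{n+2}$, so Theorem~\ref{theoremkn} gives $rvcl=n+2=m+n+|E(G_m)|-1$, and I may assume $m\geq3$. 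For the construction the plan is to spend $m$ colours, say $1,\dots,m$, one on each vertex of the core; to reserve a \emph{private} colour $m+j$ for the $j$-th flare, $j\in[1,|E(G_m)|]$; and to keep $n-1$ \emph{shared} colours $m+|E(G_m)|+1,\dots,m+n+|E(G_m)|-1$ available to all flares. In the $j$-th flare (a copy of $K_n$) one vertex receives colour $m+j$ and the other $n-1$ vertices receive the $n-1$ shared colours bijectively; this uses exactly $m+n+|E(G_m)|-1$ colours.

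Next I would verify that $c$ is a rainbow vertex colouring. Since $G_m$ is connected and all its vertices carry distinct colours, every shortest path inside the core is an RVP, and because any $u$–$v$ path with $u,v$ in the core contains a core path, this settles core-to-core pairs. If $x$ lies in the flare attached to the edge $\{a,b\}$ and $u$ is a core vertex, a shortest $u$–$a$ path in the core followed by the edge $ax$ is an RVP (its internal vertices are all core vertices). Two vertices in the same flare are adjacent; and for $x,y$ in the flares of edges $e$ and $e'$, choosing an endpoint $a$ of $e$ and an endpoint $c$ of $e'$ that either coincide or are joined by a core path, the walk $x,a,(\text{core path }a\text{–}c),c,y$ is an RVP. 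So $c$ is a rainbow vertex colouring.

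The heart of the proof is showing the rainbow codes are pairwise distinct. I would start from the elementary observation that two vertices of different colours automatically have different codes, since $d(v,R_{c(v)})=0$ while every vertex outside $R_{c(v)}$ is at distance $\geq1$ from it; hence it suffices to separate equally-coloured vertices. Within one flare all colours differ, the core vertices all have distinct colours, and a core vertex (colour $\leq m$) never shares a colour with a flare vertex (colour $>m$); so the only case left is two vertices $x,y$ in distinct flares $j\neq j'$ carrying a common shared colour. These are separated by the coordinate of the code indexed by the private colour $m+j$: it equals $1$ at $x$ (the unique vertex of colour $m+j$ lies in $x$'s own $K_n$, hence is adjacent to $x$) but $\geq2$ at $y$, because any flare vertex is adjacent only to its flare-mates and to the two endpoints of its edge. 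Therefore $rc_\Pi(x)\neq rc_\Pi(y)$, and $c$ is a locating rainbow $(m+n+|E(G_m)|-1)$-colouring, completing the upper bound.

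I expect the proof to contain no single deep step; the part requiring the most care is the exhaustive check of the RVP condition across all adjacency patterns of the core (including the cases where two flares share an endpoint or where $u$ is itself an endpoint of the relevant edge), together with the bookkeeping that guarantees \emph{every} pair of equally-coloured vertices is caught — which is precisely why it is convenient to give each flare its own private colour rather than reuse a sliding block of colours. The separate treatment of $m=2$ must not be overlooked, and one should also record that the construction is in fact not forced to be optimal, only sufficient for the stated bound.
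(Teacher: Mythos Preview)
Your proof is correct. The lower bound is identical to the paper's. For the upper bound you and the paper both colour the core with $1,\dots,m$ and route every RVP through the distinctly-coloured core; the difference lies in how the flares are coloured. The paper assigns flare $j$ the sliding block $\{m+j,\,m+j+1,\dots,m+j+n-1\}$, so distinct flares carry distinct colour \emph{sets}, and two equally-coloured flare vertices are separated by any colour present in one set but not the other. You instead give flare $j$ one private colour $m+j$ and fill the remaining $n-1$ slots from a common shared pool, so the private colour alone does the separating. Both schemes use exactly $m+n+|E(G_m)|-1$ colours and the verification of distinct rainbow codes runs along the same lines; your private-colour device makes that verification marginally more transparent (a single designated coordinate suffices), while the paper's sliding blocks make it immediately visible that no two flares look alike as colour sets. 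Your separate handling of $m=2$ is harmless but not strictly needed, since your construction already degenerates correctly there.
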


\begin{proof}
	The proof is divided into two stages: first, $rvcl(G_m \diamond K_n)\geq \max\{rvc(G_m),n+1\}$, and second, $rvcl(G_m \diamond K_n)\leq m+n+|E(G_m)|-1$.
	\begin{enumerate}
		\item 	From Equation (\ref{eq2}) and Lemma \ref{batasbawah}, we have $rvcl(G_m \diamond K_n)\geq max\{rvc(G_m),n+1\}$.
		\item Consider the core of $G_m \diamond K_n$ and color all vertices in the core with $m$ different colors. For the vertices in each flare, color them with distinct colors starting from $m+1, m+2, ..., m+n-1, m+n$ in the first flare, $m+2, m+3, ..., m+n, m+n+1$ in the second flare, and so on until $m+(m-1),m+(m),...,m+n+|E(G_m)|-1$ in the $|E(G_m)|$-th flare. This implies that $m+n+|E(G_m)|-1$ colors are needed to color all the vertices in $G_m \diamond K_n$.
		
		This coloring rule implies that, apart from two adjacent vertices, every pair of vertices in $G_m \diamond K_n$ is connected by a path whose internal vertices lie in the core. Since all vertices in the core are colored differently, it is evident that for every pair of vertices in $G_m \diamond K_n$, there exists an RVP. Next, we demonstrate that every vertex generates a unique rainbow code by considering the following.
		\begin{enumerate}
			\item Each vertex in the core has a distinct color, ensuring that every vertex in the core has a unique rainbow code.
			\item The colors $1, 2, 3, ..., m$ are exclusively assigned for the vertices in the core, ensuring that no vertex in the core shares the same rainbow code with vertices in the flare.
			\item Each vertex within a single flare is colored differently, thereby resulting in unique rainbow codes.
			\item If any vertices between flares have the same color, note that each flare contains vertices colored with a different set of colors. Therefore, the rainbow code for each vertex between the flares is different.
		\end{enumerate}
		Based on these four conditions, we can conclude that every vertex in $G_m \diamond K_n$ generates a unique rainbow code. Therefore, we have $rvcl(G_m \diamond K_n)\leq m+n+|E(G_m)|-1$.
	\end{enumerate}
	
\end{proof}

\begin{figure}[h]
	\centering
	\caption{A locating rainbow coloring of $G_m \diamond K_n$}
	\includegraphics[width=5in]{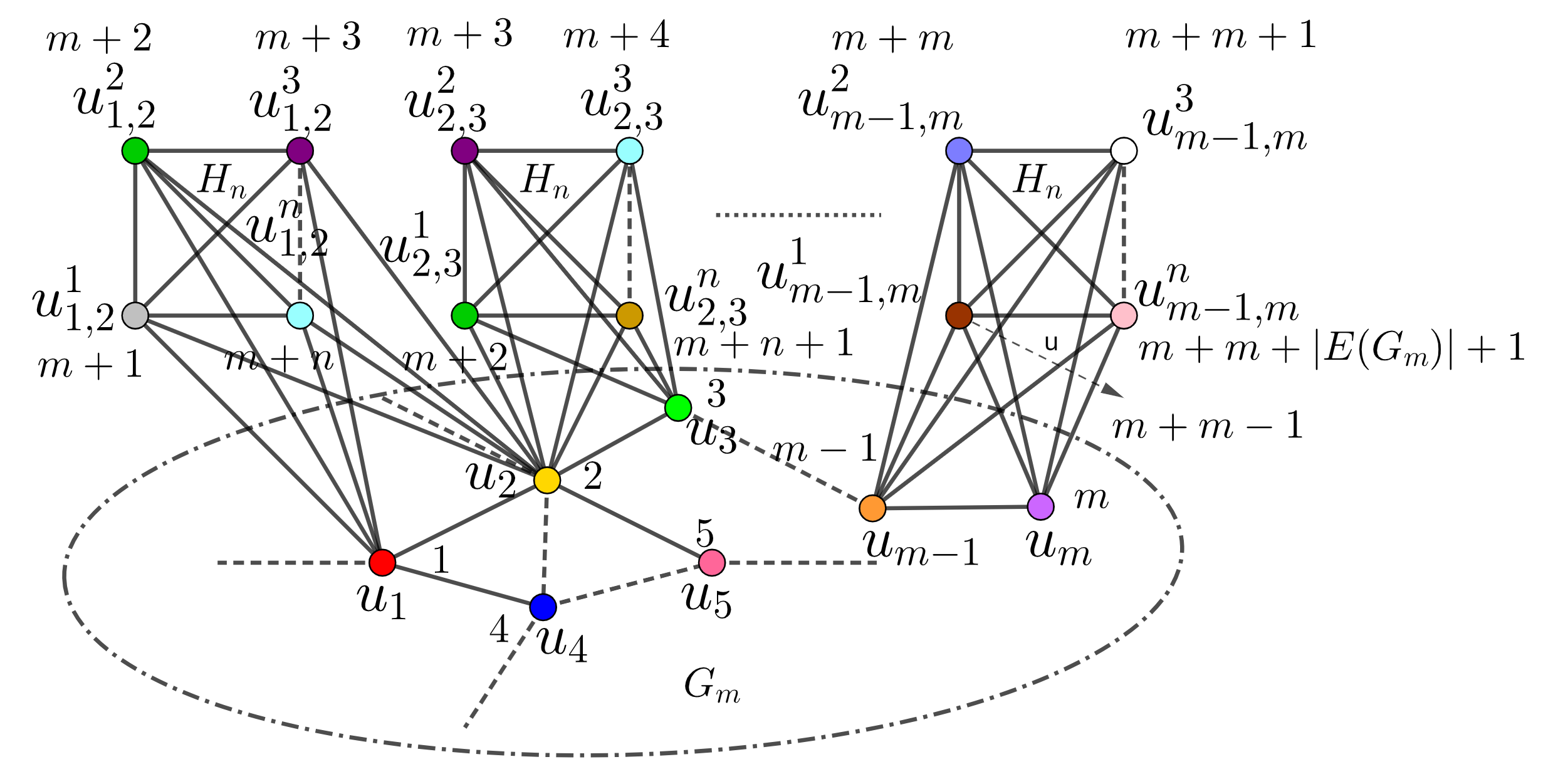}
	\label{batasataskoronasisi}
\end{figure}

Figure \ref{batasataskoronasisi} illustrates a locating rainbow coloring on the graph $G_m \diamond K_n$ using $m+n+|E(G_m)|-1$ colors. In the following subsection, we demonstrate the tightness of the upper and lower bounds in Theorem \ref{batasumum} by providing the $rvcl$ of the edge corona of several classes of graphs. These graphs include trees that have the same $rvcl$ as the upper and lower bounds in Theorem \ref{batasumum}, as well as graphs whose $rvcl$ lies between those upper and lower bounds.

\subsection{Locating Rainbow Connection Number of Edge Corona of Trees with Complete Graphs}

In this subsection, we first determine the $rvc$ of the edge corona of a tree with a complete graph. This number will serve as a strict lower bound for the $rvcl$ of the edge corona of the tree with a complete graph.

On the other hand, Theorem \ref{Lemmarvc} demonstrates that $rvcl(G_m \diamond K_n) \geq rvc(G_m)$. Let $T_m$ be a tree of order $m$. Furthermore, Theorem \ref{teotree} shows that if $G_m$ is isomorphic to $T_m$, we can obtain the exact value of the lower bound as indicated in Theorem \ref{Lemmarvc}.

\begin{teorema}
	Let $m$ and $n$ be two integers where $m\geq2$ and $n\geq 2$. Let $T_m$ be a graph of order $m$ and $H_n$ be a graph of order $n$. Then, 
	\begin{center}
		$rvc(T_m \diamond H_n) = rvc(T_m)$.
	\end{center}
	\label{teotree}
\end{teorema}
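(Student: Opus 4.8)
The plan is to establish the two inequalities $rvc(T_m \diamond H_n) \geq rvc(T_m)$ and $rvc(T_m \diamond H_n) \leq rvc(T_m)$ separately. The first inequality is immediate from Theorem \ref{Lemmarvc}, so the entire burden falls on the upper bound. To prove $rvc(T_m \diamond H_n) \leq rvc(T_m)$, I would start from an optimal rainbow vertex coloring $c$ of $T_m$ using $rvc(T_m)$ colors, and extend it to $T_m \diamond H_n$ by assigning colors to the flare vertices in a way that reuses the palette $\{1,2,\dots,rvc(T_m)\}$ without increasing it. The key observation is that a flare $H_n$ attached to edge $e_j = xy$ of $T_m$ has every one of its vertices adjacent to both $x$ and $y$; so any two vertices $u, v$ in distinct flares, or a flare vertex and a core vertex, can be routed through the core, and the real question is whether paths that must pass through a flare (namely, between two vertices of the same flare, or between the two endpoints $x,y$ of the supporting edge when that is convenient) can be made rainbow, and more importantly whether the global coloring remains a valid rainbow vertex coloring of the whole graph.

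The main steps I would carry out: (i) fix an optimal rainbow vertex coloring $c$ of $T_m$; (ii) for each flare attached to edge $xy$, color its vertices so that they avoid reintroducing obstructions — a natural choice is to give all vertices of that flare the color $c(x)$ (or $c(y)$), since a flare vertex is never an internal vertex on a shortest path between two core vertices of $T_m$ (those internal vertices all lie on the unique tree path), and giving flare vertices an existing core color keeps the color count at $rvc(T_m)$; (iii) verify that every pair of vertices in $T_m \diamond H_n$ is joined by a rainbow vertex path. For a pair of core vertices, the original RVP in $T_m$ still works. For a core vertex and a flare vertex of flare $j$ (on edge $xy$), connect via an edge to $x$ or $y$ and then follow an RVP in the core — one must check the internal colors, but since the flare vertex is an endpoint (not internal) this reduces to the core case after a short argument. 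For two vertices in the same flare, they are adjacent if $H_n$-adjacent, or connected through a common core endpoint with only one internal vertex. For two vertices in different flares, route through the core. The subtlety is ensuring the coloring choice in (ii) does not create a situation where some required path is forced to repeat a color among internal vertices; this is where I expect to need a careful case analysis keyed to the tree structure (in particular that internal vertices of core paths are themselves core vertices, hence distinctly colored by $c$).

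The main obstacle will be handling paths between a flare vertex and a far-away core vertex (or another flare) whose routing through the core forces one of the supporting endpoints $x$ or $y$ to appear as an internal vertex, possibly colliding with a flare vertex's color if that flare vertex is also internal on the path — but since at most one flare vertex lies on any such route (the starting one, as an endpoint, not internal), this should be controllable. A secondary issue is the degenerate small cases ($m=2$, where $T_2 \diamond H_n$ collapses since $rvc(K_2)$-type considerations apply, or $rvc(T_m)$ being $0$ or $1$), which I would dispatch separately up front. Once the extension is shown to be a valid rainbow vertex coloring using exactly $rvc(T_m)$ colors, combined with the lower bound from Theorem \ref{Lemmarvc} we conclude $rvc(T_m \diamond H_n) = rvc(T_m)$.
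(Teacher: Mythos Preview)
Your plan is correct and matches the paper's approach: the lower bound comes from Theorem~\ref{Lemmarvc}, and the upper bound is obtained by extending a core coloring to the flares and routing every non-adjacent pair through the core. The paper's execution is a bit more direct than yours in that it does not start from an arbitrary optimal coloring of $T_m$ but uses the explicit one that assigns distinct colors to the cut vertices of $T_m$ and color~$1$ to every other vertex (exploiting that $rvc(T_m)$ equals the number of cut vertices of $T_m$); with this choice your ``main obstacle''---the supporting endpoint $x$ or $y$ becoming an internal vertex of the routed path---vanishes immediately, since every internal vertex of any core path is a cut vertex and therefore carries its own unique color.
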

\begin{proof}

	Based on Theorem \ref{Lemmarvc}, we have $rvc(T_m \diamond H_n) \geq rvc(T_m)$. Furthermore, any two non-adjacent vertices in $T_m \diamond H_n$ are always connected by a path whose internal vertices are cut vertices. Therefore, we can establish $rvc(T_m \diamond H_n) \leq rvc(T_m)$ by coloring all cut vertices with unique colors and other vertices with color $1$. Thus, $rvc(T_m \diamond H_n) = rvc(T_m)$.
\end{proof}

Figure \ref{rvcTm} illustrates a rainbow vertex coloring on a tree graph with seven vertices and the edge corona graph of a tree of order seven with a complete graph of order three. It can be seen that both graphs have the same rainbow vertex connection number.

\begin{figure}[h]
	\centering
		\caption{A rainbow vertex coloring of (a) $T_7$ and (b) $T_7 \diamond K_3$}
	\includegraphics[width=4.1in]{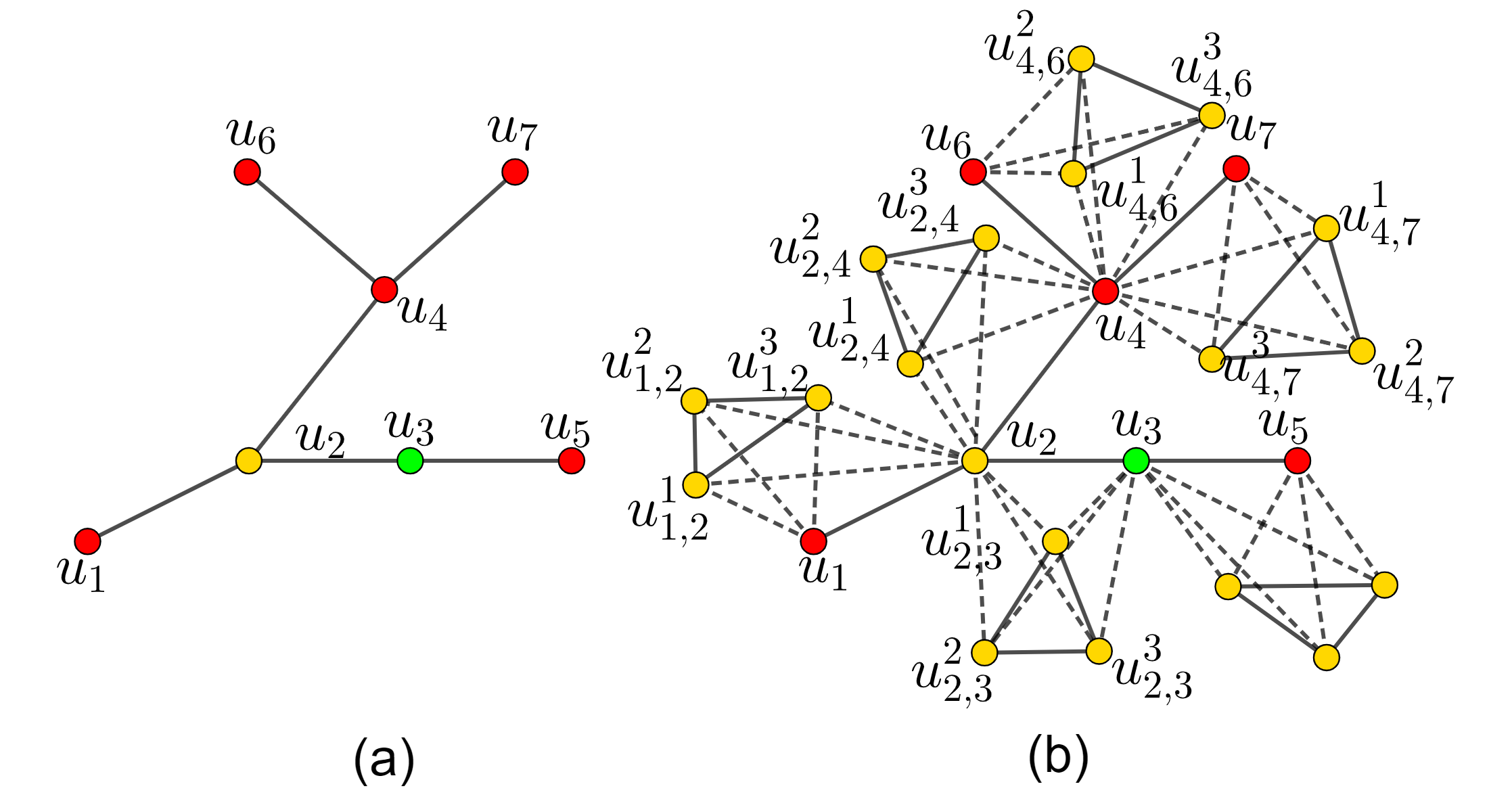}
	\label{rvcTm}
\end{figure}

\newpage

Graph $T_m$ is a tree of order $m$ with $m-1$ edges. According to Theorem \ref{batasumum}, the maximum value of $rvcl(T_m \diamond K_n)$ is $m+n+|E|-1$ or equivalently $m+n+m-2=2m+n-2$.
\\

Every tree always has at least two pendant vertices. If $m=2$, which corresponds to a single edge, $T_2 \diamond K_n \equiv K_{n+2}$. According to Theorem \ref{theoremkn}, it clearly requires $n+2$ colors. If $m\geq 3$, then there are at least two disjoint pendant edges. Each end vertex of these edges is connected to all vertices in the flare in $T_m \diamond K_n$, creating at least two subgraphs that are complete graphs with $n+1$ vertices. 

Based on Lemma \ref{batasbawah}, at least $n+2$ colors are required so that every vertex generates a unique rainbow code. Therefore, the upper and lower bounds of $rvcl(T_m \diamond K_n)$ are as follows.

\begin{lemma}
	Let $m$ and $n$ be two integers where $m\geq2$ and $n\geq 2$. Let $T_m$ be a tree of order $m$, $K_n$ be a complete graph of order $n$, and $rvc(T_m)$ be an $rvc$ of graph $T_m$. Then 
	\begin{center}
		$max\{rvc(T_m),n+2\}\leq rvcl(T_m \diamond K_n) \leq 2m+n-2$. 
		\label{lemmabatasumumpohon}
	\end{center}
\end{lemma}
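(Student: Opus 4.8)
The plan is to establish the two inequalities separately. The upper bound $rvcl(T_m \diamond K_n) \le 2m+n-2$ follows immediately from Theorem \ref{batasumum}, since a tree $T_m$ has $|E(T_m)| = m-1$ edges, so the general bound $m+n+|E(G_m)|-1$ specializes to $m+n+(m-1)-1 = 2m+n-2$. For the lower bound, I would split into the case $m=2$ and the case $m \ge 3$. When $m=2$, the tree is a single edge and $T_2 \diamond K_n$ is isomorphic to $K_{n+2}$; by Theorem \ref{theoremkn} its $rvcl$ equals $n+2$, and since $rvc(T_2)=0$ (or $1$, whichever convention the paper uses for $K_2$), the maximum $\max\{rvc(T_2), n+2\}$ is $n+2$, so the bound holds with equality.

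For $m \ge 3$, I would argue that $rvcl(T_m \diamond K_n) \ge \max\{rvc(T_m), n+2\}$ by bounding below by each term. The bound $rvcl(T_m \diamond K_n) \ge rvc(T_m)$ is already available: it follows from Equation (\ref{eq2}) together with Theorem \ref{teotree}, which gives $rvc(T_m \diamond K_n) = rvc(T_m)$. The remaining and main point is $rvcl(T_m \diamond K_n) \ge n+2$. Here I would use that every tree of order $m \ge 3$ has at least two pendant vertices, and in fact — as the paragraph preceding the lemma observes — at least two disjoint pendant edges (e.g.\ take a longest path in $T_m$; its two endpoints are pendant, and for $m\ge 4$ the two pendant edges are disjoint, while for $m=3$ the path $P_3$ has its two pendant edges sharing the center, but the two flares attached to the two edges still yield the needed structure). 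Each pendant edge $e_j$ of $T_m$, together with the $j$-th copy of $K_n$, induces a copy of $K_{n+1}$ in $T_m \diamond K_n$: the $n$ vertices of that flare are mutually adjacent and each is also adjacent to both endpoints of $e_j$. Within such a flare, the $n$ flare-vertices are mutually adjacent and are all equidistant (distance $1$) from every other vertex of $T_m \diamond K_n$ that lies outside the $K_{n+1}$, and distance $1$ from the two core-endpoints as well — so they form a set to which Lemma \ref{lemma1} applies, forcing all $n$ of them to receive distinct colors. Taking two such pendant flares $A$ and $B$, each of size $n$, mutually adjacent within themselves and equidistant from the rest of the graph, Lemma \ref{lemadjacent} (with $t=n$) gives $rvcl(T_m \diamond K_n) \ge n+1$; to push this to $n+2$ I would additionally exploit the pendant core-vertex attached to each such flare, which together with the $n$ flare-vertices forms a $K_{n+1}$ all of whose $n+1$ vertices are mutually adjacent and equidistant from the rest of $T_m \diamond K_n$, so applying Lemma \ref{lemadjacent}-type reasoning with $t = n+1$ — or directly Lemma \ref{lemma1} on these $n+1$ vertices plus a counting argument à la the proof of Lemma \ref{lemadjacent} — yields $rvcl(T_m \diamond K_n) \ge n+2$.

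The main obstacle will be the last step: carefully verifying that a pendant core-vertex together with its attached flare genuinely forms a block of $n+1$ vertices that are pairwise adjacent \emph{and} jointly equidistant from every remaining vertex, and then adapting the pigeonhole argument of Lemma \ref{lemadjacent} to conclude that $n+1$ colors are not enough when there are two such blocks. Concretely, if only $n+1$ colors were available, Lemma \ref{lemma1} forces each block of $n+1$ mutually adjacent, mutually equidistant vertices to use all $n+1$ colors, so within each block every vertex has the same rainbow code (distance $0$ to its own color-class, distance $1$ to the other $n$), and the two blocks are indistinguishable from outside — producing repeated rainbow codes and the desired contradiction. Once that is in place, combining $rvcl(T_m \diamond K_n) \ge n+2$, $rvcl(T_m \diamond K_n) \ge rvc(T_m)$, and the upper bound $2m+n-2$ gives the claimed chain $\max\{rvc(T_m), n+2\} \le rvcl(T_m \diamond K_n) \le 2m+n-2$.
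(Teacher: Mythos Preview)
Your proposal is correct and follows essentially the same route as the paper: the upper bound is the specialization of Theorem~\ref{batasumum} to $|E(T_m)|=m-1$, and the lower bound comes from observing that for $m\ge 3$ a pendant core-vertex together with its attached flare forms a clique $K_{n+1}$ whose vertices are pairwise equidistant from the rest of $T_m\diamond K_n$, so that with two such blocks Lemma~\ref{lemadjacent} (applied with $t=n+1$) forces $rvcl\ge n+2$. The paper's justification (the two paragraphs preceding the lemma) is terser and cites Lemma~\ref{batasbawah} where Lemma~\ref{lemadjacent} is really what is being invoked; your write-up is in fact more explicit about the key verification, and your handling of the $m=3$ case (where the two pendant edges share a vertex but the two $(n{+}1)$-cliques are still disjoint and satisfy the equidistance hypothesis) is a genuine clarification over the paper's phrasing ``at least two disjoint pendant edges.''
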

$P_2 \diamond K_n \equiv K_{n+2}$, which has a $rvcl$ equal to the upper bound of Lemma \ref{batasumum} and lemma \ref{lemmabatasumumpohon}. Furthermore, in the theorem below, we will demonstrate that the $rvcl$ of a path with a complete graph is equal to the lower bound of Lemma \ref{lemmabatasumumpohon}.
For ease of color labelling rules, we sequentially label each vertex and edge in the resulting edge corona of a path of order $m$ and a complete graph with order $n$ sequentially, as follows. Let $V(P_m\diamond K_n)=\{u_i|i\in[1,m]\} \cup \{u_{i,i+1}^j|i\in[1,m-1], j\in [1,n]\}$, and $E(P_m\diamond K_n)=\{u_iu_{i+1}|i\in[1,m-1]\}\cup  \{u_iu_{i,i+1}^j|i\in[1,m-1], j\in[1,n]\}  \cup \{u_{i,k}^ju_{i,l}^j|i,j\in[1,m], k\neq l, j\in[1,n] \}$. $diam(P_m\diamond K_n)=m-1$. Refer to Figure \ref{pmkn} for an illustration of the graph $P_m \diamond K_n$.
\begin{figure}[h]
	\centering
		\caption{Graph $P_m \diamond K_n$}
	\includegraphics[width=3.4in]{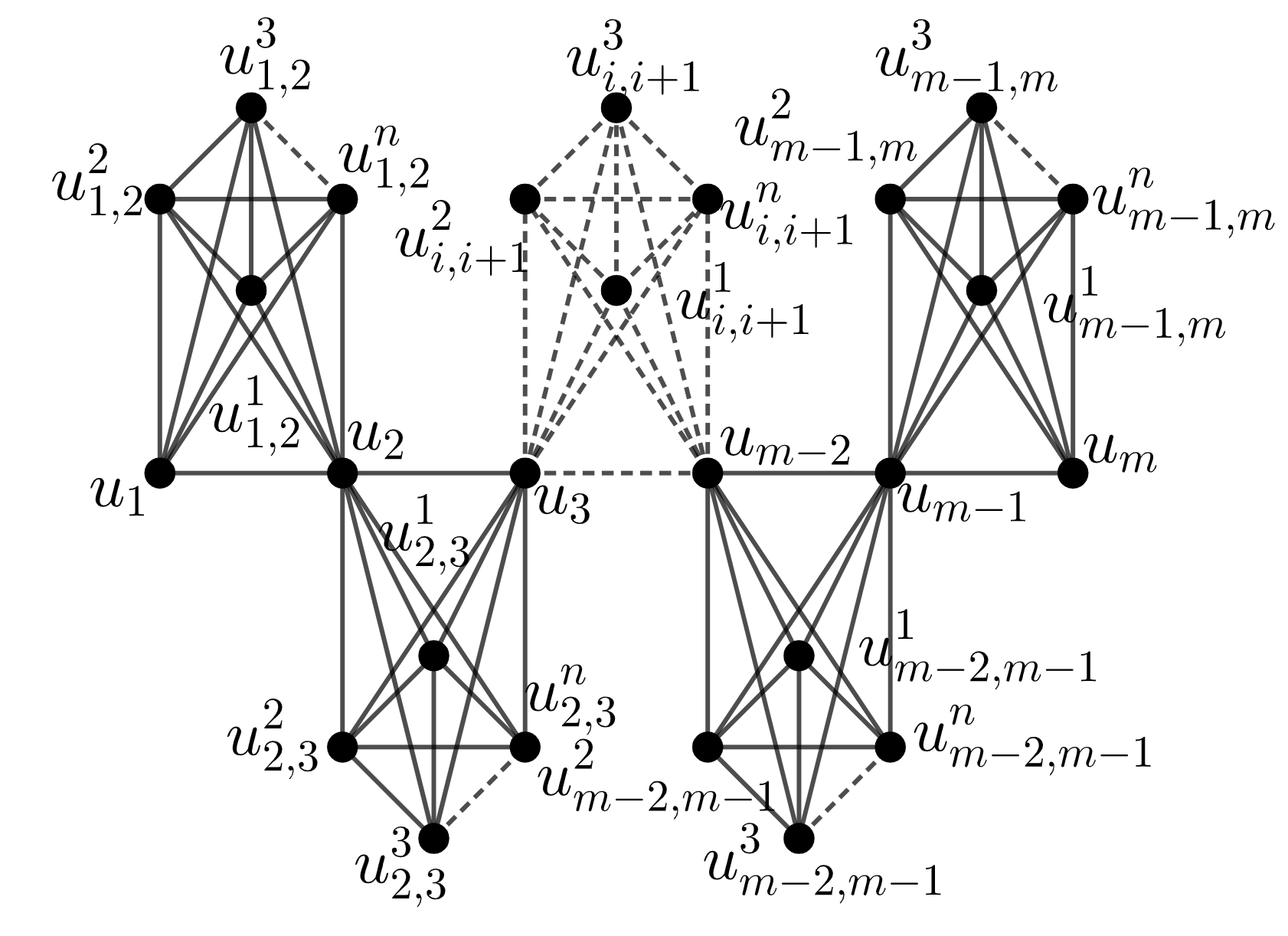}

	\label{pmkn}
\end{figure}

\begin{teorema}
	Let $m$ and $n$ be two integers where $m\geq2$ and $n\geq 2$. Let $P_m$ be a path of order $m$ and $K_n$ be a complete graph of order $n$. Then 
	\begin{center}
		$rvcl(P_m \diamond K_n) = max\{rvc(P_m),n+2\}$.
	\end{center}
	\label{Theorempath}
\end{teorema}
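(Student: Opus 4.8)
The plan is to prove the two inequalities separately; the inequality ``$\ge$'' is immediate and all of the work lies in the construction for ``$\le$''. Since $P_m$ is a path, its $m-2$ internal vertices are exactly its cut vertices, so $rvc(P_m)=m-2$ (Lemma~\ref{cutvertex} gives $rvc(P_m)\ge m-2$, and colouring the internal vertices with distinct colours realises it). Hence $\max\{rvc(P_m),n+2\}=\max\{m-2,n+2\}=:k$, and Lemma~\ref{lemmabatasumumpohon} with $T_m=P_m$ already gives $rvcl(P_m\diamond K_n)\ge k$. It remains to exhibit a locating rainbow $k$-colouring of $P_m\diamond K_n$. The case $m=2$ is disposed of at once: $P_2\diamond K_n\cong K_{n+2}$, so $rvcl=n+2=k$ by Theorem~\ref{theoremkn}. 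The case $m=3$ is treated separately (see the last paragraph). So assume $m\ge 4$.

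For the construction I would use the labels fixed before the statement. Colour the inner spine by $c(u_i)=i-1$ for $i\in[2,m-1]$ — these are $m-2\le k$ pairwise distinct colours on the cut vertices — and set $c(u_1)=1$ and $c(u_m)=m-2$ (these two colours are distinct because $m\ge 4$). Fix an $n$-element palette $P\subseteq[1,k]\setminus\{1,m-2\}$; when $m\le n+4$ (so $k=n+2$) this forces $P=[1,n+2]\setminus\{1,m-2\}$, and when $m\ge n+5$ (so $k=m-2$) any $n$-subset works. Colour the $n$ vertices of every flare with the $n$ colours of $P$, so that each flare is internally rainbow and the colours $1$ and $m-2$ occur only on $\{u_1,u_2\}$ and on $\{u_{m-1},u_m\}$ respectively. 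The idea is that colours $1$ and $m-2$ then act as ``position landmarks'' at the two ends of the spine.

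The verification splits as usual. The rainbow-path condition is easy: since $u_2,\dots,u_{m-1}$ carry distinct colours, any two vertices are joined by a path whose internal vertices lie in $\{u_2,\dots,u_{m-1}\}$, hence is rainbow — exactly the mechanism used in Theorem~\ref{teotree}. For uniqueness of rainbow codes, the key computation is that of the pair $\bigl(d(v,R_1),d(v,R_{m-2})\bigr)$: using that colours $1$ and $m-2$ are confined to the two ends (so no flare ``screens'' them), one checks that this pair is $(0,m-3)$ for $v=u_2$, $(m-3,0)$ for $v=u_{m-1}$, $(a-2,\,m-1-a)$ for $v=u_a$ with $3\le a\le m-2$, $(0,m-2)$ for $v=u_1$, $(m-2,0)$ for $v=u_m$, and $\bigl(\max\{i-1,1\},\,\max\{m-1-i,1\}\bigr)$ for every vertex of the $i$-th flare. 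The sum of the two coordinates already separates the spine-interior vertices (sum $m-3$) from $\{u_1,u_m\}$ together with the interior-flare vertices (sum $m-2$) and from the two end-flares (sum $m-1$); the pair itself is injective in the flare index, so vertices in different flares are separated; vertices inside one flare, and the remaining same-sum pairs (such as $u_1$ versus an interior-flare vertex, or $u_1$ versus $u_m$), are separated by their colours, since $1,m-2\notin P$ and $u_2,\dots,u_{m-1}$ are coloured distinctly. Putting these together shows the colouring is a locating rainbow $k$-colouring, so $rvcl(P_m\diamond K_n)\le k$.

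The main obstacle is really the boundary and small cases rather than the generic argument. The value $m=3$ has to be done by hand, because there the two end-landmarks collapse into the single colour $1$; instead one exploits that for a path of length $3$ only $n+1$ colours are available per end-clique, so the two flares can be given \emph{different} $n$-subsets of an $(n+1)$-colour palette, and one checks directly that the resulting local colour sets separate the two flares. Similarly, throughout the tight band $m\le n+4$ the palette $P$ is rigid, so one must double-check that this rigid choice still makes every flare-versus-spine and flare-versus-flare comparison go through within exactly $k=\max\{m-2,n+2\}$ colours; the bulk of a full write-up is precisely this regime-by-regime bookkeeping confirming that the families of pairs listed above all receive different rainbow codes.
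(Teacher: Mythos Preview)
Your argument for $m\ge 4$ is correct and is a genuinely different construction from the paper's. The paper colours the $i$-th flare with a \emph{shifted window} of colours, namely $c(u_{i,i+1}^{j})=((i+j-2)\bmod k)+1$, and then distinguishes flares by the fact that their colour \emph{sets} differ. You instead give every flare the \emph{same} palette $P\subseteq[1,k]\setminus\{1,m-2\}$ and reserve the two colours $1$ and $m-2$ as positional landmarks concentrated at the two ends of the spine; distinct flares are then separated by the pair $(d(\cdot,R_1),d(\cdot,R_{m-2}))$. Your approach exploits the linear geometry of $P_m$ directly and yields a cleaner case analysis (the ``sum of the two coordinates'' stratification is nice); the paper's approach is more mechanical but generalises more readily to cores that are not paths.

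One caveat on $m=3$: your sketch there (``give the two flares different $n$-subsets of an $(n+1)$-colour palette'') does not work as stated. With $k=n+2$ colours, if $c(u_1),c(u_2),c(u_3)$ are distinct and each flare is forced to use the complementary $n$ colours, then the flare-1 vertex of colour $c(u_3)$ ends up with the same rainbow code as $u_3$ (both are at distance $1$ from every colour class except their own). What \emph{does} work for $m=3$ is the natural degeneration of your landmark idea: put unique colours $1$ and $2$ on $u_1$ and $u_3$ (so $R_1=\{u_1\}$, $R_2=\{u_3\}$), colour $u_2$ and both flares from $\{3,\dots,n+2\}$; then $(d(\cdot,R_1),d(\cdot,R_2))$ separates the two flares exactly as in your general argument. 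So the boundary case is salvageable, but not by the mechanism you described.
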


\begin{proof}
	The proof is divided into two parts: $rvcl(P_m \diamond K_n)\geq max\{rvc(P_m),n+2\}$ and $rvcl(P_m \diamond K_n)\leq max\{rvc(P_m),n+2\}$.
	\begin{enumerate}
		\item Based on (\ref{eq2}), we have $rvcl(P_m \diamond K_n)\geq rvc(P_m)$. Then, using the same method as deriving the lower bound from Lemma \ref{lemmabatasumumpohon}, we also get $rvcl(P_m \diamond K_n)\geq n+2$.
		\item To show that $rvcl(P_m \diamond K_n)\leq max\{rvc(P_m),n+2\}$, we construct a vertex coloring $c:V(P_m \diamond K_n)\longrightarrow[1, max\{rvc(P_m),n+2\}]$ as follows:
		\begin{center}
			$c(u_1)=1$.\\
			$c(u_m)=m-2.$\\
			$c(u_i)=i-1.$ for $i\in[2,m-1]$.
		\end{center}
		
		\begin{center}
			$\begin{array}{ccl}
				c(u_{i,i+1}^j)&=&\left\{
				\begin{array}{ll}
					j+1, & \hbox{for $i=1$ ;}\\
					(i+j-2)~mod~(max\{rvc(P_m),n+2\})+1, & \hbox{for $i\in[2,m-1]$.}
				\end{array}
				\right.
			\end{array}$
		\end{center}

		Based on these coloring rules, it is determined that, except for two adjacent vertices, every pair of vertices in $P_m \diamond K_n$ is connected by a path whose internal vertices belong to the core. Since all vertices in the core have distinct colors, it is evident that for every two vertices in $P_m \diamond K_n$, there exists an RVP. Next, we demonstrate that every vertex generates a unique rainbow code by considering the following conditions.
		\begin{enumerate}
			\item $c(u_i)\neq c(u_j)$ for distinct $i,j\in[2,m-1]$, therefore $rc_{\Pi}(u_i)\neq rc_{\Pi}(u_j)$.
			\item 
			$c(u_1)=c(u_2)=1$, but $u_1$ and $u_2$ share neighbors with vertices in one of the flare that do not contain the color $max\{rvc(P_m),n+2\}$. Therefore, there must be at least one vertex $z$ such that $c(z)=max\{rvc(P_m),n+2\}$ and $d(u_1,z)>d(u_2,z)$, resulting in $rc_{\Pi}(u_1)\neq rc_{\Pi}(u_2)$.
			
			\item $c(u_{m-1})=c(u_{m})=m-2$, but $u_{m-1}$ and $u_{m}$ have common neighbors with vertices in one of the flare that do not have the color $z$. Therefore, there must be at least one vertex $y$ such that $c(y)=z$ and $d(u_{m-1},y)<d(u_m,y)$, resulting in $rc_{\Pi}(u_{m-1})\neq rc_{\Pi}(u_m)$.
			
			\item $c(u_{i,j})\neq c(u_{i,k})$ for $i\in[1,m]$, and distinct $j,l\in[1,n]$. 
			
			\item  The vertices in each flare are not colored with the same set of colors; hence, the vertices in different flares have different rainbow codes.
			
		\end{enumerate}
		Based on these five conditions, it can be concluded that every vertex in $P_m \diamond K_n$ generates a unique rainbow code. Therefore, we obtain $rvcl(P_m \diamond K_n)=max\{rvc(P_m),n+2\}$.
	\end{enumerate}
\end{proof}

Theorem \ref{Theorempath} illustrates that the path length or diameter significantly influences the $rvcl$ of the edge corona of a path with a complete graph. Conversely, in trees with small diameters, such as stars, it is the order of the complete graph that affects the $rvcl$ of the edge corona. Related results have been demonstrated in \cite{bustan2023locating}.

\subsection{Locating Rainbow Connection Number of Edge Corona of Cycles with Complete Graphs}

Furthermore, in \cite{fauziah}, the $rvc$ of the edge corona between a cycle and a path, as well as between two cycles, has been determined. In this paper, we establish the $rvc$ of the edge corona between a cycle graph with any graph $H_n$, as presented in Theorem \ref{rvccycle}.

Let $V(C_m\diamond K_n)=\{u_i|i\in[1,m]\} \cup \{u_{i,i+1}^k| i\in [1,m-1], k\in[1,n]\} \cup \{u_{m,1}^k|k\in[1,n]\}$. $E(C_m\diamond K_n)=\{u_iu_j|i,j\in[1,m-1], j=i+1\} \cup \{u_mu_1\}$ $\cup \{u_iu_{i,j}^k|i,j\in[1,m], k\in[1,n]\} \cup \{u_iu_{i-1,i}^k|i\in[2,m], k\in[1,n]\} \cup \{u_1u_{m-1,1}^k|k\in[1,n]\} \cup \{u_{i,i+1}^ku_{i,i+1}^l|i,j\in [1,m-1], k,l\in[1,n], k\neq l\} \cup \{u_{m,1}^ku_{m,1}^l k,l\in[1,n], k\neq l\}$. Figure \ref{cmkn} illustrates the edge corona of a cycle graph with a complete graph.

\begin{figure}[h]
	\centering
	\includegraphics[width=3.3in]{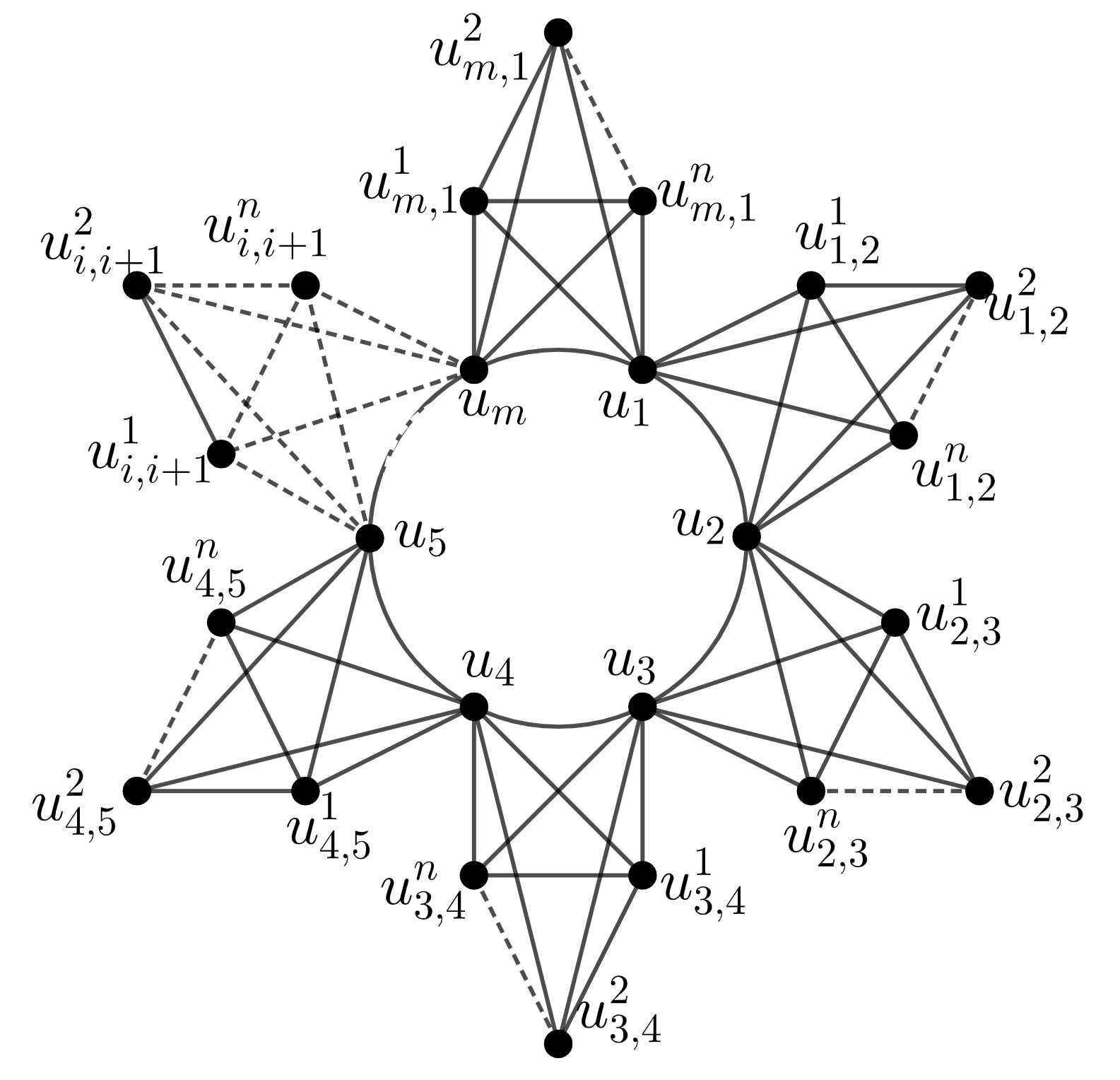}
	\caption{Graph $C_m \diamond K_n$}
	\label{cmkn}
\end{figure}

\begin{teorema}
	Let $m$ and $n$ be two integers where $m\geq2$ and $n\geq 2$. Let $C_m$ be a graph of order $m$ and $H_n$ be a graph of order $n$. Then, 
	\begin{center}
		$\begin{array}{ccl}
			rvc(C_{m} \diamond H_n)&=&\left\{
			\begin{array}{ll}
				1, & \hbox{for $m =3$ ;} \\
				\lceil\frac{m}{2}\rceil, & \hbox{for $m\geq4$.}
			\end{array}
			\right.
		\end{array}$
	\end{center}
	\label{rvccycle}
\end{teorema}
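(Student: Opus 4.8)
The plan is to prove that $rvc(C_m\diamond H_n)=\lceil m/2\rceil$ for $m\ge 4$ (the case $m=3$ being separate), since the flare $H_n$ never affects distances — each flare vertex is adjacent exactly to the two core endpoints of its edge, so every shortest path is essentially governed by the core cycle. For $m=3$ every two core vertices are adjacent and $\mathrm{diam}(C_3\diamond H_n)=2$, while $C_3\diamond H_n$ is not complete, so $rvc=1$ follows immediately. From now on assume $m\ge 4$ and set $\ell=\lfloor m/2\rfloor$, so that $\lceil m/2\rceil$ equals $\ell$ when $m$ is even and $\ell+1$ when $m$ is odd; label the core $u_1,\dots,u_m$ cyclically and write $F_{a,a+1}$ for the flare attached to the edge $u_au_{a+1}$.

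For the upper bound I would colour the core by a ``balanced rotation'': when $m=2\ell$ use the cyclic pattern $1,2,\dots,\ell,1,2,\dots,\ell$, and when $m=2\ell+1$ use $1,2,\dots,\ell+1,1,2,\dots,\ell$; all flare vertices get colour $1$. The one property needed is that the two occurrences of any repeated colour sit at cyclic distance exactly $\ell$ on the core, so that \emph{any $\ell$ consecutive core vertices receive pairwise distinct colours}. Then for every pair of vertices I exhibit a path whose set of internal vertices is a block of at most $\ell$ consecutive core vertices: two core vertices are joined by a shortest arc; a core vertex $u_i$ and a flare vertex $y\in F_{a,a+1}$ by $u_i,\dots,u,y$ where $u\in\{u_a,u_{a+1}\}$ is the endpoint closest to $u_i$; two flare vertices $x\in F_{a,a+1}$, $y\in F_{b,b+1}$ by $x,u,\dots,u',y$ for the closest pair of endpoints $u,u'$. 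A short distance computation (using $\mathrm{diam}(C_m)=\ell$, and the fact that in an odd cycle two vertices can both be at distance $\ell$ from a third only when they are adjacent) shows each such block has at most $\ell$ vertices, so each path is an RVP. Hence $rvc(C_m\diamond H_n)\le\lceil m/2\rceil$.

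For the lower bound I would first record the elementary inequality $rvc(G)\ge\mathrm{diam}(G)-1$ (an RVP with $k$ colours has at most $k$ internal vertices, hence length at most $k+1$). A routine case analysis gives $\mathrm{diam}(C_m\diamond H_n)=\ell+1$, the maximum being realised in particular by two flare vertices on ``antipodal'' edges; this already yields $rvc\ge\ell=\lceil m/2\rceil$ when $m$ is even. When $m=2\ell+1$ the diameter bound only gives $rvc\ge\ell$, so the real work is to exclude $k=\ell$ colours. Assuming such a colouring existed, for each $i$ I would look at $x\in F_{i-1,i}$ and $y\in F_{i+\ell-1,i+\ell}$, which lie at distance $\ell+1$, and check that the \emph{only} $x$--$y$ path of length at most $\ell+1$ is $x,u_i,u_{i+1},\dots,u_{i+\ell-1},y$; since an RVP has length at most $\ell+1$, this forces $u_i,\dots,u_{i+\ell-1}$ to carry all $\ell$ colours. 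Letting $i$ range over $\mathbb{Z}_m$ and comparing consecutive length-$\ell$ windows gives $c(u_i)=c(u_{i+\ell})$ for all $i$; as $\gcd(\ell,2\ell+1)=1$ this makes $c$ constant on the core, contradicting that a length-$\ell\ge 2$ window is rainbow. Hence $rvc\ge\ell+1=\lceil m/2\rceil$.

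I expect the main obstacle to be the odd-$m$ lower bound, precisely the uniqueness claim for the $x$--$y$ path: one must verify carefully that neither a detour into a flare nor the long arc of the cycle produces an alternative $x$--$y$ path of length at most $\ell+1$, since the whole forcing argument collapses otherwise. The remaining ingredients — the diameter computation and the check that the exhibited paths are rainbow — are bookkeeping, but they genuinely rely on the ``cyclic distance $\ell$'' feature of the core colouring, so that small computation should be stated explicitly rather than glossed over.
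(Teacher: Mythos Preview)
Your proposal is correct. For $m=3$, for the even-$m$ lower bound via the diameter, and for the upper bound via the ``balanced rotation'' colouring of the core with every flare vertex coloured $1$, your argument coincides with the paper's.

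The genuine difference is the odd-$m$ lower bound. The paper dispatches this in a single sentence: assuming only $\lceil m/2\rceil-1$ colours, it asserts that ``there exists at least one path with a maximum length of $\lfloor\tfrac{m+1}{4}\rfloor$ containing at least two vertices of the same colour, which prevents $C_m\diamond H_n$ from having a rainbow vertex colouring.'' This is a pigeonhole remark on the core, stated without further justification of why such a repeat on a short arc actually obstructs \emph{every} pair of vertices in $C_m\diamond H_n$. Your route is different and more explicit: for each $i$ you isolate a flare pair $x\in F_{i-1,i}$, $y\in F_{i+\ell-1,i+\ell}$ at distance $\ell+1$, argue that the \emph{unique} $x$--$y$ path of length at most $\ell+1$ has internal block $u_i,\dots,u_{i+\ell-1}$, deduce that every length-$\ell$ window on the core is a full set of colours, slide the window to obtain $c(u_i)=c(u_{i+\ell})$, and finish with $\gcd(\ell,2\ell+1)=1$. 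What this buys you is a self-contained contradiction that does not lean on an unexplained numerical bound; the price is exactly the verification you already flagged, namely that neither the long arc nor a flare detour yields an alternative $x$--$y$ path of length $\le\ell+1$. That check is routine (any flare detour adds at least one edge, and every other choice of endpoint pair gives a core arc of length at least $\ell$), so your plan goes through as written.
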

\begin{proof}
	
	\begin{enumerate}
		\item $m=3$\\
		Since $diam(C_3 \diamond H_n)=2$, it is clear that $rvc(C_3 \diamond H_n)=1$ by coloring all vertices with the same color.
		\item $m$ even \\
		Since $diam(C_m\diamond H_n)=\lceil\frac{m}{2}\rceil +1$ for even $m$, we have $rvc(C_m\diamond H_n)\geq \lceil\frac{m}{2}\rceil$. To show that $rvc(C_m\diamond H_n)\leq \lceil\frac{m}{2}\rceil$, we assign different colors to the vertices in the core within distance $\lceil\frac{m}{2}\rceil$, and color the vertices in the flare with color $1$. Therefore, any two vertices in $C_m \diamond H_n$ are always connected by an RVP. Thus, $rvc(C_m\diamond H_n)= \lceil\frac{m}{2}\rceil$.
		\item $m$ odd\\
		Suppose $rvc(C_m \diamond H_n)=\lceil\frac{m}{2}\rceil-1$. This implies that there exists at least one path with a maximum length of $\lfloor\frac{m+1}{4}\rfloor$ containing at least two vertices of the same color, which prevents $C_m \diamond H_n$ from having a rainbow vertex coloring. Therefore, $rvc(C_m \diamond H_n)\geq \lceil\frac{m}{2}\rceil$. By applying the same coloring rules as for the case of an even $m$, we get $rvc(C_m \diamond H_n)=\lceil\frac{m}{2}\rceil$.
	\end{enumerate}
	
\end{proof}

\begin{teorema}
	Let $m$ and $n$ be two integers where $m\geq3$ and $n\geq 2$. Let $C_m$ be a cycle of order $m$, and $K_n$ be a complete graph of order $n$. Then, 
	
	\begin{center}
		$\begin{array}{ccl}
			rvcl(C_{m} \diamond K_n)&=&\left\{
			\begin{array}{ll}
				n+1, & \hbox{for $m=3$ or $m\geq 4$ and $n\geq m-1$ ;}\\
				max\{\lceil\frac{m}{2}\rceil+1, n+2, \}, & \hbox{ $m\geq 4$ and $\lceil\frac{m}{2}\rceil -1\leq n<m-1 $;}\\
				\lceil\frac{m}{2}\rceil, & \hbox{for $m\geq5$ and $n\leq \lceil\frac{m}{2}\rceil-2$.}
			\end{array}
			\right.
		\end{array}$
	\end{center}
\end{teorema}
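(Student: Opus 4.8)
The plan is to split the argument according to the three regimes in the statement: (i) $m=3$, or $m\ge 4$ with $n\ge m-1$; (ii) $m\ge 4$ with $\lceil m/2\rceil-1\le n\le m-2$; and (iii) $m\ge 5$ with $n\le\lceil m/2\rceil-2$. In each regime I establish a matching lower and upper bound. The lower bounds rest on three ingredients: Lemma~\ref{batasbawah} gives $rvcl(C_m\diamond K_n)\ge n+1$; inequality~(\ref{eq1}) together with Theorem~\ref{rvccycle} gives $rvcl(C_m\diamond K_n)\ge rvc(C_m\diamond K_n)=\lceil m/2\rceil$ when $m\ge 4$; and, since in each flare the $n$ vertices are mutually adjacent and equidistant from every vertex outside that flare, Lemma~\ref{lemma1} forces the $n$ vertices of any one flare to receive $n$ pairwise distinct colors.

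For the lower bounds, regime (i) needs only $rvcl\ge n+1$, which is Lemma~\ref{batasbawah}; regime (iii) needs only $rvcl\ge\lceil m/2\rceil$, which is immediate since here $n+1\le\lceil m/2\rceil-1$. The substantive case is regime (ii), where I must prove $rvcl(C_m\diamond K_n)\ge n+2$ (this already subsumes $\lceil m/2\rceil+1$ because $n\ge\lceil m/2\rceil-1$). Assume a locating rainbow $(n+1)$-coloring exists. Each of the $m$ flares then uses $n$ of the $n+1$ colors and so omits exactly one, say flare $F_i$ omits color $a_i$. For a vertex $w\in F_i$ with $c(w)=j$, every color except $j$ and $a_i$ appears on one of the $n-1$ other vertices of $F_i$, hence at distance $1$ from $w$; thus $rc_\Pi(w)$ is $0$ in coordinate $j$, equals $1$ in every coordinate other than $j$ and $a_i$, and in coordinate $a_i$ it equals $1$ if $a_i$ is the color of an endpoint of the $i$-th edge and is $\ge 2$ otherwise — and the value in coordinate $a_i$ is the same for all vertices of $F_i$. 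Since $m\ge n+2$, the pigeonhole principle (applied with some care to how the omitted colors are distributed around the cycle, using that an omitted color of a flare lies at distance $2$ from that flare unless both neighbouring flares also omit it) forces two flares whose ``$a_i$-profiles'' coincide; comparing the codes of two vertices of the same color, one in each of these flares, then produces two vertices with identical rainbow codes, a contradiction.

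For the upper bounds I exhibit explicit colorings, in the spirit of the construction in Theorem~\ref{Theorempath}. In every regime the core $C_m$ is colored so that, on every shortest path joining two flare-attachment points, the internal core vertices are pairwise differently colored; since every non-adjacent pair of vertices of $C_m\diamond K_n$ is joined by a path whose internal vertices run along such a core path, this guarantees a rainbow vertex path between any two vertices. In regime (i), where $m\le n+1$, color the core with $m$ distinct colors and let the $i$-th flare use the colors $[1,n+1]\setminus\{x_i\}$, where $x_1,\dots,x_m$ is a permutation of the core colors with $x_i$ distinct from the two endpoint colors of the $i$-th edge; then every core vertex has rainbow code equal to all $1$'s except a single $0$, every flare vertex of $F_i$ has a single $0$ (at its own color) and a single entry equal to $2$ (at coordinate $x_i$), and the pair (color, position of the $2$) determines the vertex, so all codes are distinct and $rvcl\le n+1$. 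In regimes (ii) and (iii) the core must reuse colors (using $n+2$, respectively $\lceil m/2\rceil$, colors), so I color it cyclically — antipodal core vertices receiving the same color when $m$ is even, as in the proof of Theorem~\ref{rvccycle} — keeping every window of $\lceil m/2\rceil$ consecutive core vertices rainbow, and I color the $j$-th vertex of the $i$-th flare by a position-dependent rule such as $c(u_{i,i+1}^j)=(i+j-2)\bmod k+1$ with $k$ the number of colors, so that consecutive flares carry different sets of colors. One then verifies the analogues of conditions (a)--(e) in the proof of Theorem~\ref{Theorempath}: vertices inside one flare are separated by their colors; vertices in different flares by their differing color sets and by their distances to the colors missing from each flare; and core vertices are separated from flare vertices because certain short distances are realized only from within the core.

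The hard part will be the lower bound in regime (ii). The pigeonhole step is trivial, but turning it into an actual collision of rainbow codes requires controlling, for every flare omitting a fixed color $a$, whether $a$ lies at distance $1$ from that flare (i.e.\ on an endpoint of its edge) or at distance $\ge 2$, and in the latter case determining the exact distance, which is governed by the length of the longest run of consecutive flares all omitting $a$. Tracking these runs around the cycle and showing that when $n\le m-2$ no assignment of omitted colors can keep all flare vertices' codes distinct is the real content; the boundary case $n=m-1$, where the extra color is saved, is exactly regime (i) and is settled by the construction above. A secondary obstacle is the upper bound in regime (iii): with only $\lceil m/2\rceil$ colors the core coloring is forced to be essentially unique, so one must check by hand that the small flares ($n\le\lceil m/2\rceil-2$) can still be colored to make all rainbow codes distinct, where the large diameter $\lceil m/2\rceil+1$ supplies enough distinct distance values.
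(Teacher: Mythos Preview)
Your plan is essentially the same as the paper's: both handle the three regimes separately, derive the lower bounds from Lemma~\ref{batasbawah} and Theorem~\ref{rvccycle}, obtain the regime-(ii) lower bound by assuming $n+1$ colors, using pigeonhole to find two flares with the same omitted color, and then analysing distances to that missing color class, and exhibit the upper bounds via cyclically shifted colorings of the core and the flares. The only cosmetic difference is in the regime-(ii) lower bound, where the paper zooms in on a single flare and performs a four-way case split on $d(u_1,R_a)$ versus $d(u_2,R_a)$ rather than your proposed ``run-tracking'' around the cycle; both arrive at the same contradiction, and your observation that $n\ge\lceil m/2\rceil-1$ collapses the paper's two subcases of regime~(ii) into the single target $n+2$ is a mild simplification.
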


\begin{proof}
	
	\begin{enumerate}
		\item $m=3$ or $m\geq 4$ and $n\geq m-1$. \\
		Based on Lemma \ref{lemma1}, we have $rvcl(C_m\diamond K_n)\geq n+1$. Next, we will demonstrate $rvcl(C_m\diamond K_n)\leq n+1$ by constructing the following vertex coloring. $c:(V(C_m\diamond K_n)) \longrightarrow[1, n+1]$.
		
		\begin{center}
			$c(u_i)=i$ for $i\in[1,m]$.\\
				$c(u_{i,j}^k)=((i+(k-2))~mod~(n+1))+1$ for $i\in[1,m]$, $k\in[1,n]$, $j=i+1$ for $i\neq m$, $j=1$ for $i=m$. 
		\end{center}
		
		Next, consider several conditions regarding the rainbow codes of the vertices in $C_m\diamond K_n$ after applying the coloring.
		\begin{enumerate}
			\item Each flare has a set of colors, not all of which are the same.
			\item Each vertex in the core is colored differently. 
			\item Each vertex in the core is adjacent to the vertices of two flares, each colored with $n$ different colors. Since each flare is colored with a set of colors where not all elements are the same, at least each vertex in the core is at a distance of $1$ from the vertices colored with $n$ different colors. Meanwhile, each vertex in the flare is at a distance of $1$ from the vertices colored with $n-1$ different colors.
		\end{enumerate}
		Thus every vertex generates a unique rainbow code. Therefore, we get $rvcl(C_m\diamond K_n) \leq   n +1$. Hence, it is proven that $rvcl(C_m \diamond K_n) = n + 1$.
		
		\item $m\geq 4$ and $\lceil\frac{m}{2}\rceil -1\leq n<m-1 $.\\
		\begin{enumerate}
			\item $n=\lceil\frac{m}{2}\rceil-1$.\\
			Suppose $rvcl(C_m\diamond K_n)= \lceil\frac{m}{2}\rceil$. Since $n=\lceil\frac{m}{2}\rceil-1$, there are at least two flares with a color set where all elements are identical. Without a loss of generality, assume one of the flares contains the vertices $u_{1,2}^k$ for $k\in[1,n]$. According to Lemma \ref{lemma1}, $u_{1,2}^k$ are colored with $\lceil\frac{m}{2}\rceil-1$ different colors. This means that only one color out of $\lceil\frac{m}{2}\rceil$ is not used to color the vertices $u_{1,2}^k$. Without a loss of generality, assume that color be $\lceil\frac{m}{2}\rceil$, and $c(u_{1,2}^k)=k$ for $k\in[1, \lceil\frac{m}{2}\rceil-1]$ .
			
			Consider the case when $m=5$. There are two possible colorings for the vertices $u_1$ and $u_2$: either $c(u_1)\textcolor{white}{i}=c(u_2)$ or $c(u_1)\neq\textcolor{white}{i}c(u_2)$.
			
			If $c(u_1)=\textcolor{white}{i}c(u_2)=x$, then there must be at least three vertices with the same color and at a distance of one from an $R_i$ for\textcolor{white}{i} $i\in[1,\lceil\frac{m}{2}\rceil-1]$ and\textcolor{white}{i} $i\neq x$. As a result, these three vertices must have different distances to $R_{\lceil\frac{m}{2}\rceil}$. Since $diam(C_5 \diamond K_{n})= 3$, the only possible entry with respect to $R_{\lceil\frac{m}{2}\rceil}$ are $1$, $2$, and $3$.
			
			On the other hand, if one of them has a distance of $3$ to $R_{\lceil\frac{m}{2}\rceil}$, then no other vertex can have an entry of $1$ with respect to $R_{\lceil\frac{m}{2}\rceil}$. This leads to at least two vertices having the same rainbow code, which is a contradiction.
			
			Next, for the case of $c(u_1)\neq c(u_2)$, we combine the proof with the case of $m\geq 6$, where for $m\geq 6$  every two vertices are connected by an RVP if $c(u_1)\neq c(u_2)$.
			
			Since the set of vertices $\{u_{1,2}^k|k\in[1,n]\}\cup \{u_1, u_2\}$ forms a complete graph with order $\lceil\frac{m}{2}\rceil +1$, by Lemma \ref{lemma1}, $c(u_1)=c(u_2)\neq \lceil\frac{m}{2}\rceil$. Therefore, without a loss of generality, let $c(u_1)=1$ and $c(u_2)=2$.
			
			Next, there are four possible distances between the vertices $u_1$ and $u_2$ with respect to $R_{\lceil\frac{m}{2}\rceil}$. \begin{enumerate}
				\item 	If $d(u_1, R_{\lceil\frac{m}{2}\rceil})>d(u_2,R_{\lceil\frac{m}{2}\rceil})$, then $rc_{\Pi}(u_1)=rc_{\Pi}(u_{1,2}^1)$. 
				\item If $d(u_1, R_{\lceil\frac{m}{2}\rceil})<d(u_2,R_{\lceil\frac{m}{2}\rceil})$, then $rc_{\Pi}(u_2)=rc_{\Pi}(u_{1,2}^2)$.
				\item  
				If $d(u_1, R_{\lceil\frac{m}{2}\rceil})=d(u_2,R_{\lceil\frac{m}{2}\rceil})\neq 1$, this means that the color $\lceil\frac{m}{2}\rceil$ is used in a flare that is not adjacent to $u_1$ and $u_2$. Consequently, there are two other flares that have a set of colors $\{1,2,...,\lceil\frac{m}{2}\rceil-1\}$ and are at a distance of $2$ from $R_{\lceil\frac{m}{2}\rceil}$.As a result, the vertices in both of these flares have the same rainbow code.
				\item If $d(u_1, R_{\lceil\frac{m}{2}\rceil})=d(u_2,R_{\lceil\frac{m}{2}\rceil})= 1$. Consequently, the color $\lceil\frac{m}{2}\rceil$ must be assigned to one vertex in a flare containing the vertices $u_{m,1}^k$ and one vertex in a flare containing the vertices $u_{2,3}^k$ for $k\in[1,n]$. Next, consider another flare where the vertices are colored with $[1, \lceil\frac{m}{2}\rceil-1]$. Repeat possibilities (a), (b), (c), until possibility (d) occurs again. Then, there must be at least $n$ pairs of vertices with the same rainbow code.
				
			\end{enumerate}
			Based on (i), (ii), (iii), and (iv), we get a contradiction. Thus $rvcl(C_m\diamond K_n) \geq  \lceil\frac{m}{2}\rceil +1$. For $m\leq 6$, we have $rvcl(C_m\diamond K_n)\leq \lceil\frac{m}{2}\rceil +1$ as shown in Figure \ref{c5}. 
			
			\begin{figure}
				\caption{A locating rainbow coloring of (a) $C_5 \diamond K_2$ and (b) $C_6 \diamond K_2$.}
				\includegraphics[width=0.85\textwidth]{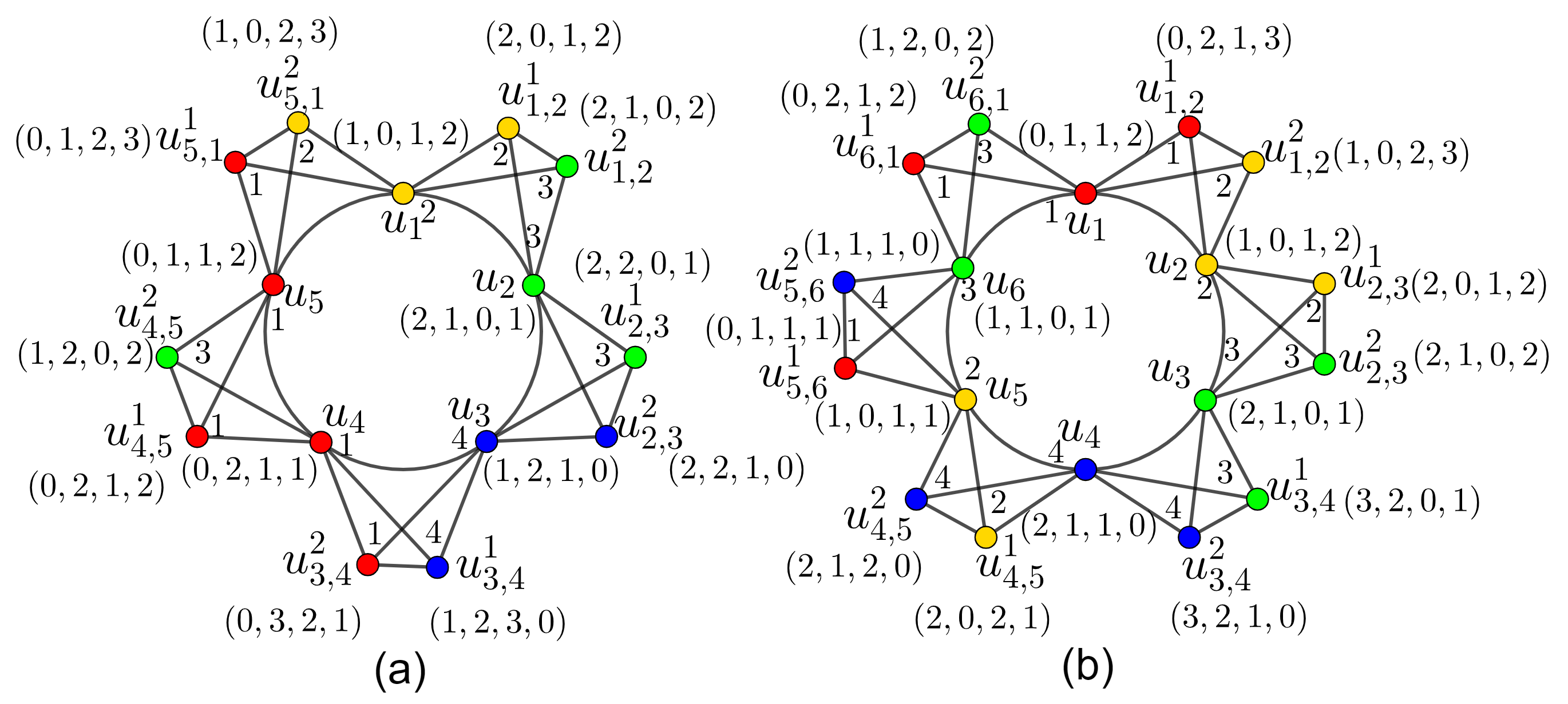}
				\label{c5}
			\end{figure}
			
			Next, we demonstrate $rvcl(C_m\diamond K_n)\leq \lceil\frac{m}{2}\rceil+1$ for $m\geq 7$ by constructing the following vertex coloring. $c(V(C_m\diamond K_n)): \longrightarrow[1, \lceil\frac{m}{2}\rceil+1]$.
			
			\begin{center}
				$c(u_i)=	((i-1)~mod~ (\lceil\frac{m}{2}\rceil))+1$ for $i\in[1,m]$.
			\end{center}
			\begin{center}				$\begin{array}{ccl}
					c(u_{i,j}^k)&=&\left\{
					\begin{array}{ll}
						((i+(k-2))~mod~ (\lceil\frac{m}{2}\rceil))+1, & \hbox{for $i\in [1, \lceil\frac{m}{2}\rceil -2]\cup$}\\
						~ & \hbox{ $ [\lceil\frac{m}{2}\rceil,m]$,} \\
						~ & \hbox{$j=i+1$ for $i\neq m$, } \\
						~ & \hbox{$j=1$ for $i=m$,}\\
						~ & \hbox{$k\in[1,n]$, $(i+k-2)<m$;}\\
						((i+(k-2))~mod~ (\lceil\frac{m}{2}\rceil +1))+1, & \hbox{for $i=\lceil\frac{m}{2}\rceil-1$, $k\in[1,n]$, }\\
						~ & \hbox{$i+(k-2)\leq m$, $j=i+1$;}\\
						((i+(k-2))~mod~ (m))+1, & \hbox{for $i\in [\lfloor\frac{m}{2}\rfloor +1,m]$,}\\
						~& \hbox{$j=i+1$ for $i\neq m$,}\\
						~& \hbox{$j=1$ for $i=m$, }\\
						~& \hbox{$k\in[1,n]$, $(i+k-2)\geq m$ .}
						
					\end{array}
					\right.
				\end{array}$
			\end{center}
			
			From these coloring rules, it is obtained that any two vertices in the core, are always given different colors at a maximum distance of $\lfloor\frac{m}{2}\rfloor$. This ensures that for every two vertices in  $C_m \diamond K_n$, there exists an RVP.
			
			Next, consider several conditions regarding the color codes of the vertices in $C_m\diamond K_n$ after applying the coloring.
			\begin{enumerate}
				\item At most, two flares have a set of colors where all elements are the same, but they have different distances to the color $R_{\lceil\frac{m}{2}\rceil+1}$.
				\item One color is used at most twice in the core.
				\item When $c(u_i)=c(u_j)$ for distinct $i,j\in [1,m]$, $d(u_i,R_{\lceil\frac{m}{2}\rceil+1})\neq d(u_j, R_{\lceil\frac{m}{2}\rceil+1})$ follows.
				\item 
				Each vertex within the core is adjacent to vertices in two flares, each colored with $n$ different colors. Since each flare is colored with a set of colors where not all elements are the same, every vertex in the core is at least at a distance of $1$ from vertices colored with $n$ different colors. Additionally, it is known that every vertex in a flare is at a distance of $1$ from vertices colored with $n-1$ different colors.
			\end{enumerate}
			Therefore, we get $rvcl(C_m\diamond K_n) =  \lceil\frac{m}{2}\rceil +1$. 
			
			\item $n+2> \lceil\frac{m}{2}\rceil +1$.
			Suppose $rvcl(C_m\diamond K_n) = n +1$. Since $m>n+1$, there must be at least two flares whose vertices are colored with the same set of colors. Using the same method as in subcase (a), a contradiction is obtained. Thus, we must have $rvcl(C_m\diamond K_n) \geq n +2$.
			
			Based on Figure \ref{c6} and Figure \ref{c7}, we have $rvcl(C_m\diamond K_n)\leq n+2$ for $m\leq 6$. Next, for other values of $m$, we demonstrate $rvcl(C_m\diamond K_n)\leq n+2$ by constructing the following vertex coloring. Let $c:(V(C_m\diamond K_n)) \longrightarrow[1, n+2]$.
			
			\begin{figure}
				\caption{A locating rainbow coloring of (a) $C_4 \diamond K_2$ and (b) $C_5 \diamond K_3$.}
				\includegraphics[width=1\textwidth]{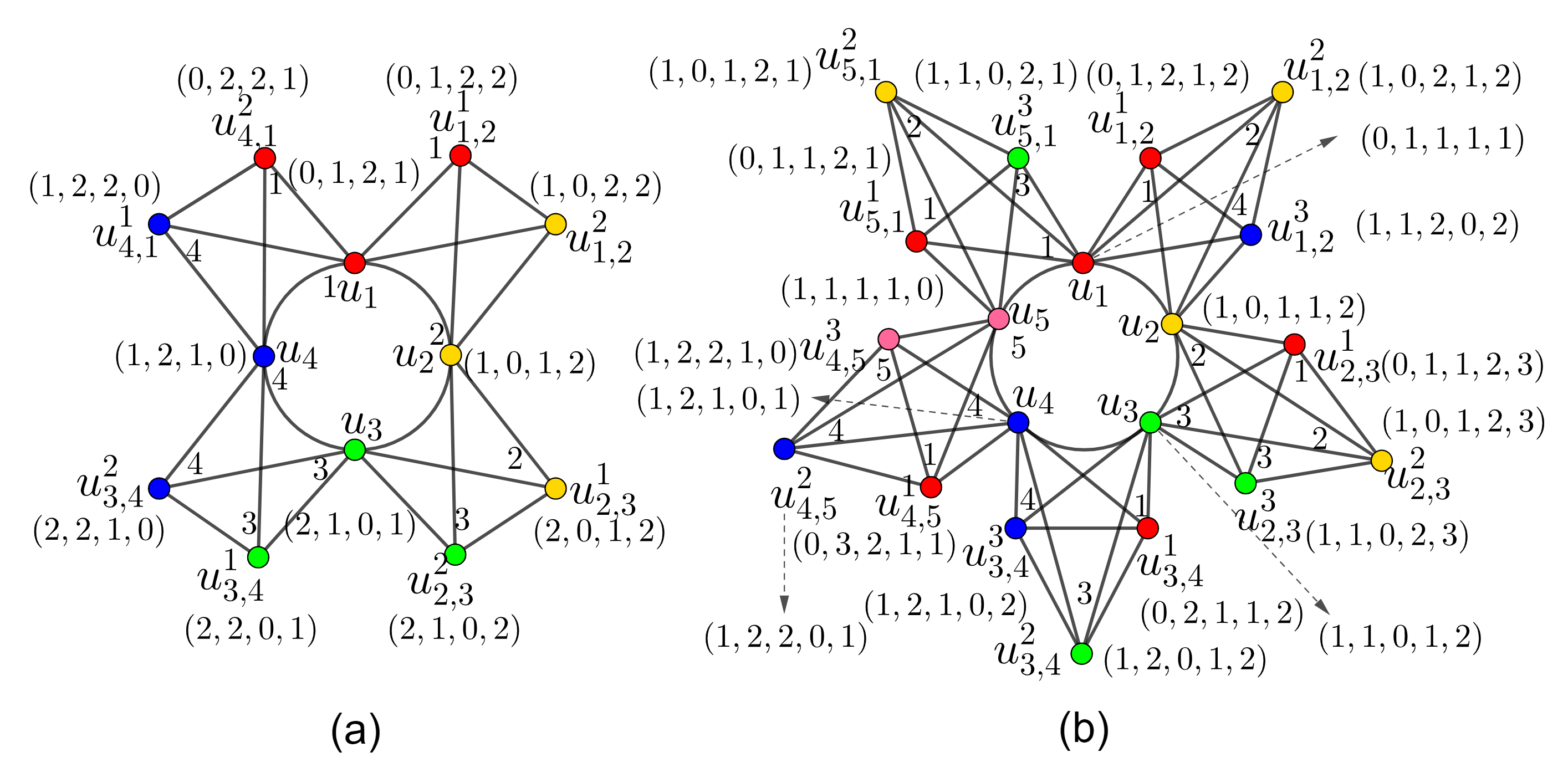}
				\label{c6}
			\end{figure}
		
			\begin{figure}
			\caption{A locating rainbow coloring of (a) $C_6 \diamond K_3$ and (b) $C_6 \diamond K_4$.}
			\includegraphics[width=1\textwidth]{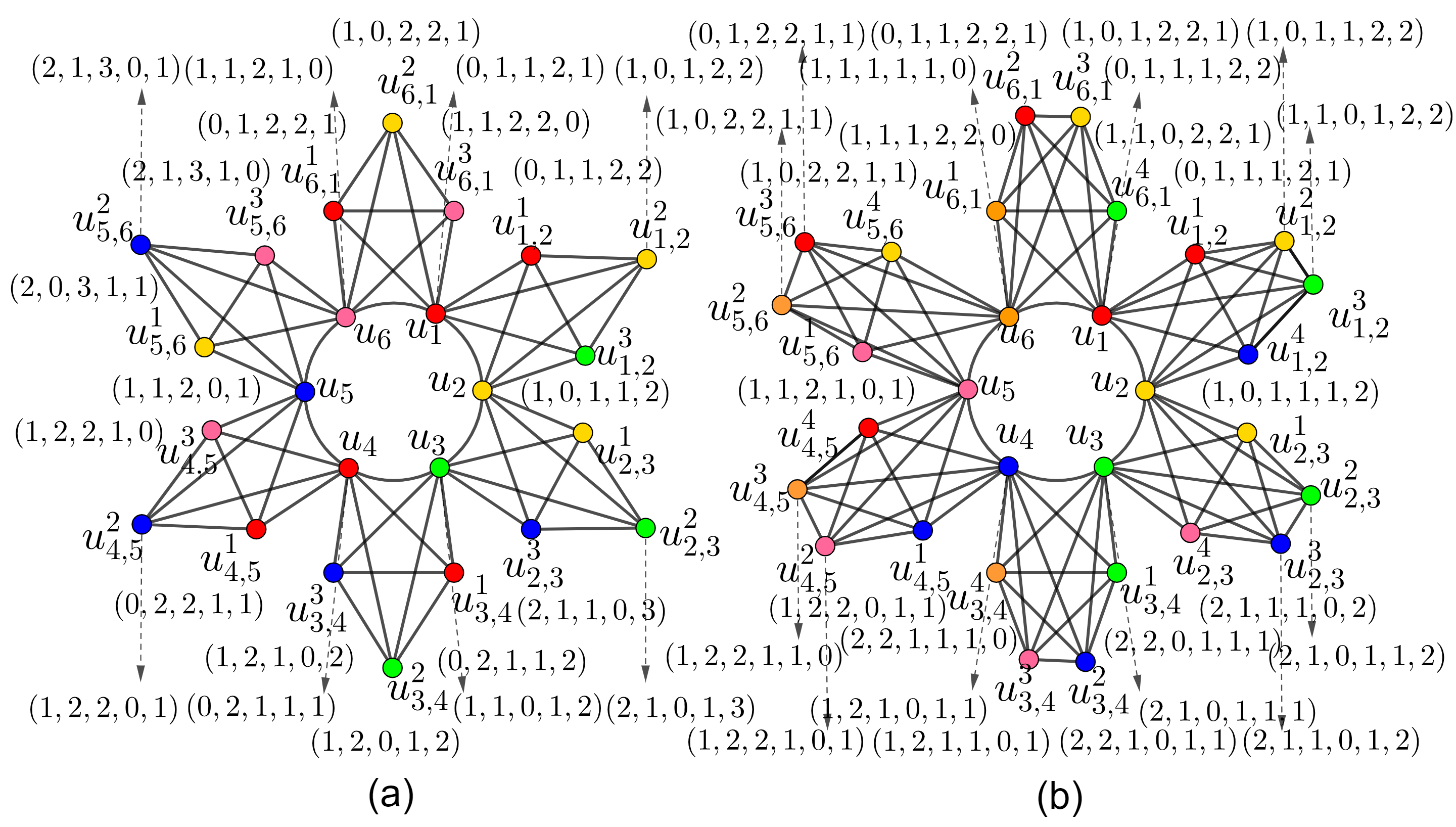}
			\label{c7}
		\end{figure}
			
			\begin{center}
				$c(u_i)=	((i-1)~mod~ (\lceil\frac{m}{2}\rceil))+1$ for $i\in[1,m]$.
			\end{center}
			\begin{center}
				$\begin{array}{ccl}
					c(u_{i,j}^k)&=&\left\{
					\begin{array}{ll}
						((i+(k-2))~mod~ (n+1))+1, & \hbox{for $i\in [1, \lceil\frac{m}{2}\rceil -2]\cup$}\\
						~ & \hbox{ $ [\lceil\frac{m}{2}\rceil,m]$, } \\
						~ & \hbox{$j=i+1$ for $i\neq m$, } \\
						~ & \hbox{$j=1$ for $i=m$,}\\
						~ & \hbox{$k\in[1,n]$, $(i+k-2)<m$;}\\
						((i+(k-2))~mod~ (n+2))+1, & \hbox{for $i=\lceil\frac{m}{2}\rceil-1$, $k\in[1,n]$, }\\
						~ & \hbox{$i+(k-2)\leq m$, $j=i+1$;}\\
						((i+(k-2))~mod~ (m))+1, & \hbox{for $i\in [\lfloor\frac{m}{2}\rfloor +1,m]$,}\\
						~& \hbox{$j=i+1$ for $i\neq m$,}\\
						~& \hbox{$j=1$ for $i=m$, }\\
						~& \hbox{$k\in[1,n]$, $(i+k-2)\geq m$ .}
						
					\end{array}
					\right.
				\end{array}$
			\end{center}
			
			From these coloring rules, it is obtained that for any two vertices in the core, are always given different colors at a maximum distance of $\lfloor\frac{m}{2}\rfloor$. This ensures that for every two vertices in $C_m \diamond K_n$, there exists an RVP.
			
			Next, consider several conditions regarding the color codes of the vertices in $C_m\diamond K_n$ after applying the coloring.
			\begin{enumerate}
				\item 
				At most, two flares have a set of colors where all elements are the same, but they have different distances to the color $R_{n+2}$.
				\item 
				One color is used at most twice in the core.
				\item When $c(u_i)=c(u_j)$ for distinct $i,j\in [1,m]$, $d(u_i,R_{n+2})\neq d(u_j, R_{n+2})$ follows.
				\item Each vertex within the core is adjacent to vertices in two flares, each colored with $n$ different colors. Since each flare is colored with a set of colors where not all elements are the same, every vertex in the core is at least at a distance of $1$ from vertices colored with $n$ different colors. Additionally, it is known that every vertex in a flare is at a distance of $1$ from vertices colored with $n-1$ different colors.
			\end{enumerate}
			Therefore, we get $rvcl(C_m\diamond K_n) =  n+2$
		\end{enumerate}
		
		\item $ m\geq 5$ and $n\leq \lceil\frac{m}{2}\rceil-2$.\\
		Based on Theorem \ref{rvccycle}, we have $rvcl(C_m\diamond K_n)\geq \lceil\frac{m}{2}\rceil$. 
		Next, we establish $rvcl(C_m\diamond K_n)\leq \lceil\frac{m}{2}\rceil$ by constructing the following vertex coloring. $c:(V(C_m\diamond K_n)) \longrightarrow[1, \lceil\frac{m}{2}\rceil]$.
		\begin{enumerate}
			\item $n$ odd.
			\begin{center}
				$c(u_i)=	((i-1)~mod~ (\lceil\frac{m}{2}\rceil))+1$ for $i\in[1,m]$.
			\end{center}
			\begin{center}
				$\begin{array}{ccl}
					c(u_{i,j}^k)&=&\left\{
					\begin{array}{ll}
						((i+(k-2))~mod~ (\lfloor\frac{m}{2}\rfloor))+1, & \hbox{for $i\in [1, \lfloor\frac{m}{2}\rfloor -1],$} \\
						~ & \hbox{$j=i+1$, $k\in[1,n]$;}\\
						(i+(k-2))~mod~ (\lfloor\frac{m}{2}\rfloor +1)+1, & \hbox{for $i\in [\lfloor\frac{m}{2}\rfloor,m]$, $k\in[1,n]$,}\\
						~ & \hbox{$i+(k-1)\leq m$, $j=i+1$ }\\
						~& \hbox{for $i\neq m$, } \\
						~& \hbox{$j=1$ for $i=m$;}\\
						((i+k-2)~mod~ (m))+1, & \hbox{for $i\in [\lfloor\frac{m}{2}\rfloor +1,m]$,}\\
						~& \hbox{$k\in[1,n]$, $(i+k-2)\geq m$, }\\
						~& \hbox{$j=i+1$ for $i\neq m$, }\\
						~& \hbox{$j=1$ for $i=m$.}
						
					\end{array}
					\right.
				\end{array}$
			\end{center}
			\item $n$ even.
			\begin{center}
				$c(u_i)=	((i-1)~mod~ \frac{m}{2})+1$ for $i\in[1,m]$.\\
				
			\end{center}
			\begin{center}
				$\begin{array}{ccl}
					c(u_{i,j}^k)&=&\left\{
					\begin{array}{ll}
						((i+(k-2))~mod~ (\frac{m}{2}))+1, & \hbox{for $i\in [1, \frac{m}{2}]$,} \\
						~ & \hbox{$j=i+1$, $k\in[1,n]$;}\\
						((i+k-1)~mod~(\frac{m}{2}))+1, & \hbox{for $i\in [\frac{m}{2}+1, m]$, $j=i+1$ } \\
						~& \hbox{for $i\in[\frac{m}{2}+1,m-1]$, }\\
						~& \hbox{$k\in[1,n-1]$, $(i+k-1)< m$;}\\
						((i+k)~mod~ (\frac{m}{2}))+1, & \hbox{for $i\in [\frac{m}{2}+1,m]$, $j=i+1$}\\
						~& \hbox{for $i\neq m$, }\\
						~& \hbox{$j=1$ for $i=m$, }\\
						~& \hbox{$k=n$, $(i+k)< m$;}\\
						((i+k-1))~mod~ (m))+1, & \hbox{for $i\in [\frac{m}{2}+1,m]$, $j=i+1$ }	\\
						~& \hbox{for $i\neq m$,}\\
						~& \hbox{ $j=1$ for $i=m$,}\\
						~& \hbox{$k\in[1,n-1]$, $(i+k-1)\geq m$;}\\
						((i+k)~mod~ (m)+1, & \hbox{for $i\in [\frac{m}{2}+1,m]$, $j=i+1$}\\
						~& \hbox{for $i\neq m$, }\\
						~& \hbox{$j=1$ for $i=m$,}\\
						~& \hbox{$k=n$, $(i+k)\geq m$.}
					\end{array}
					\right.
				\end{array}$
			\end{center}
		\end{enumerate}
		
		From these coloring rules, it is obtained that any two vertices in the core are always given different colors at a maximum distance of $\lfloor\frac{m}{2}\rfloor$. This ensures that for every two vertices in $C_m \diamond K_n$, there exists an RVP.
		
		Next, consider some conditions regarding the rainbow codes of the vertices in $C_m\diamond K_n$ after applying the coloring.
		\begin{enumerate}
			\item For even $n$, each vertex in a flare has a set of colors where not all elements are the same.
			\item For odd $n$, at most two flares have a set of colors where all elements are the same, but they have different distances to the color $R_{\lceil\frac{m}{2}\rceil}$.
			\item One color is used at most twice in the core.
			\item 
			For even $n$, when $c(u_i)=c(u_j)$ for distinct $i,j\in [1,m]$, according to (a), these two points must be adjacent to vertices with different sets of colors.
			
			For odd $n$, when $c(u_i)=c(u_j)$ for distinct $i,j\in [1,m]$, we obtain $d(u_i,R_{\lceil\frac{m}{2}\rceil})\neq d(u_j, R_{\lceil\frac{m}{2}\rceil}).$
			
			\item Each vertex within the core is adjacent to vertices in two flares, each colored with different sets of $n$ distinct colors. Since each flare is colored with a set of colors where all elements are distinct, every vertex in the core is at a distance of $1$ from vertices colored with $n$ distinct colors. Additionally, it is known that each vertex in a flare is at a distance of $1$ from vertices colored with $n-1$ distinct colors.
		\end{enumerate}
		
		Based on the above conditions, it is concluded that every vertex generates a unique rainbow code. Thus, $rvcl(C_m\diamond K_n)=\lceil\frac{m}{2}\rceil$.

	\end{enumerate}
\end{proof}
\subsection{Locating Rainbow Connection Number of Edge Corona of Complete Graphs with Complete Graphs}
In this subsection, we determine $rvc$ and $rvcl$ of the edge corona of complete graphs with order $m$ and $n$, denoted by $K_m\diamond K_n$. Previously, in graphs $P_m\diamond K_n$ and $C_m\diamond K_n$, the smallest value between their $rvcl$ was the $rvc$ of both graphs. Conversely, in this subsection, we demonstrate that $rvc(K_m\diamond K_n)$ is not the strict lower bound of $rvcl(K_m\diamond K_n)$.
Let $V(K_m\diamond K_n)=\{u_i|i\in[1,m]\} \cup \{u_{i,i+1}^k| i\in [1,m-1], k\in[1,n]\} \cup \{u_{m,1}^k|k\in[1,n]\}$. $E(K_m\diamond K_n)=\{u_iu_j|i,j\in[1,m-1], j=i+1\} \cup \{u_mu_1\}$ $\cup \{u_iu_{i,j}^k|i,j\in[1,m], k\in[1,n]\} \cup \{u_iu_{i-1,i}^k|i\in[2,m], k\in[1,n]\} \cup \{u_1u_{m-1,1}^k|k\in[1,n]\} \cup \{u_{i,i+1}^ku_{i,i+1}^l|i,j\in [1,m-1], k,l\in[1,n], k\neq l\} \cup \{u_{m,1}^ku_{m,1}^l k,l\in[1,n], k\neq l\}$. Figure \ref{kmkn} illustrates the edge corona of a complete graph of order $m$ with a complete graph of order $n$.
\begin{figure}
	\centering
	\caption{Graf $K_m \diamond K_n$.}
	\includegraphics[width=3.3in]{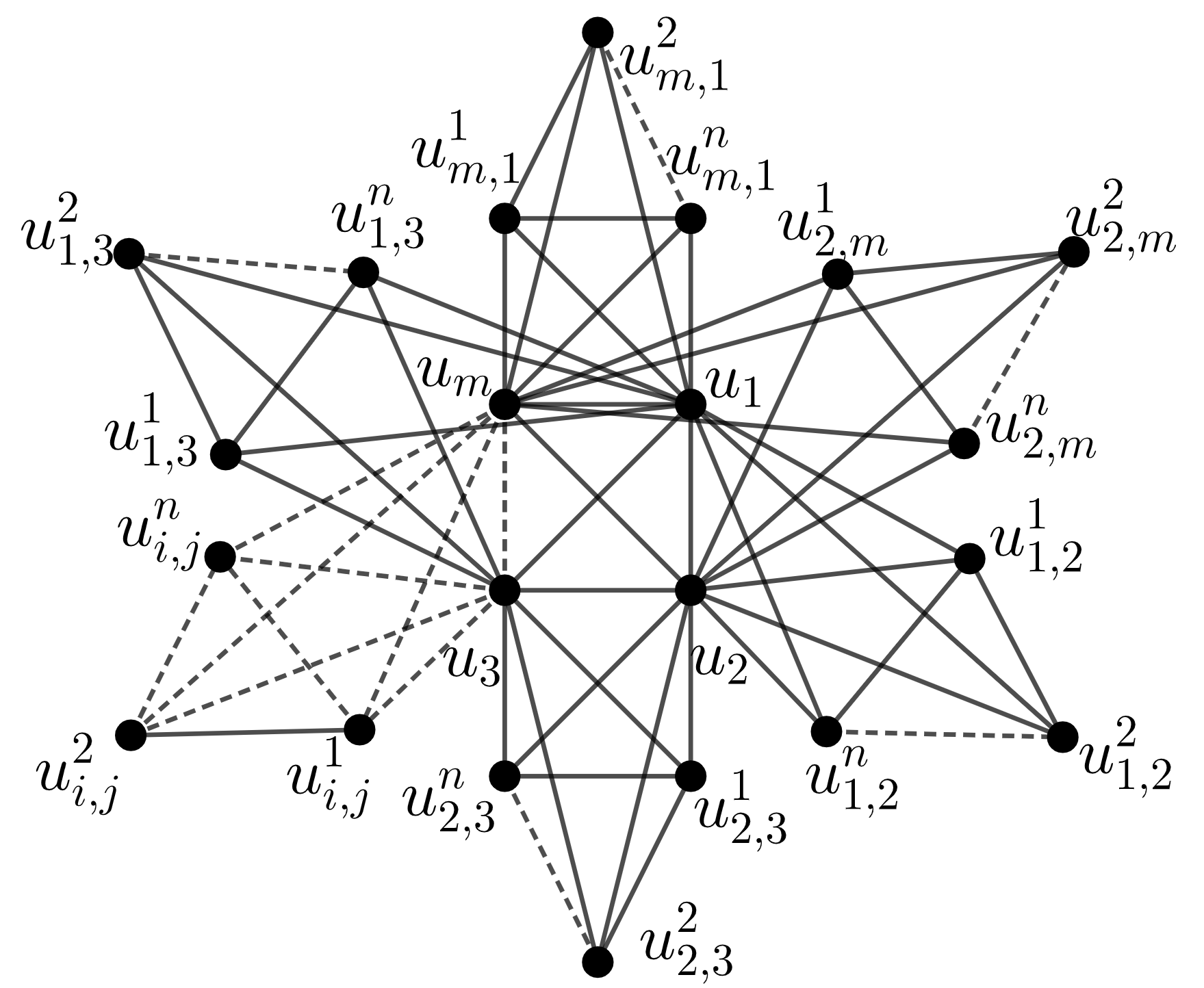}
	\label{kmkn}
\end{figure}
\begin{teorema}
	Let $m$ and $n$ be two integers where $m\geq3$ and $n\geq 2$. Let $K_m$ be a graph of order $m$, $H_n$ be a graph of order $n$. Then, 
	\begin{center}
		$rvc(K_m \diamond H_n) = \lceil\frac{m}{3}\rceil$.
	\end{center}
\end{teorema}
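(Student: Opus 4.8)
The plan is to prove the two inequalities $rvc(K_m \diamond H_n)\ge \lceil m/3\rceil$ and $rvc(K_m \diamond H_n)\le \lceil m/3\rceil$ separately, after disposing of the base case $m=3$. The one structural fact that drives everything is: a vertex lying in the flare attached to the core edge $u_iu_j$ (denote this flare $H^{ij}$) is adjacent only to $u_i$, $u_j$, and the other vertices of that same flare. Consequently, any path joining a vertex $x\in H^{ij}$ to a vertex $y\in H^{kl}$ of a \emph{different} flare must leave $H^{ij}$ through $u_i$ or $u_j$ and later enter $H^{kl}$ through $u_k$ or $u_l$; when $\{i,j\}\cap\{k,l\}=\emptyset$ these two core vertices are distinct and are both \emph{internal} vertices of the path. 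For $m=3$ every two edges of $K_3$ share a vertex, so $\mathrm{diam}(K_3\diamond H_n)=2$ and colouring all vertices with a single colour shows $rvc(K_3\diamond H_n)=1=\lceil 3/3\rceil$; from here on assume $m\ge 4$.

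For the lower bound I would show that in any rainbow vertex colouring $c$ of $K_m\diamond H_n$ no colour is used on four distinct core vertices. Indeed, if $c(u_a)=c(u_b)=c(u_c)=c(u_d)=\gamma$ for distinct $a,b,c,d$, then $K_m$ contains the disjoint edges $u_au_b$ and $u_cu_d$, so the flares $H^{ab},H^{cd}$ exist; pick $x\in H^{ab}$ and $y\in H^{cd}$. By the structural fact, every $x$–$y$ path has an internal vertex in $\{u_a,u_b\}$ and another internal vertex in $\{u_c,u_d\}$, both coloured $\gamma$, hence no rainbow vertex path joins $x$ and $y$ — a contradiction. Therefore each colour class meets $\{u_1,\dots,u_m\}$ in at most three vertices, so at least $\lceil m/3\rceil$ colours occur, giving $rvc(K_m\diamond H_n)\ge\lceil m/3\rceil$.

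For the upper bound I would exhibit the colouring $c(u_i)=\lceil i/3\rceil$ for $i\in[1,m]$ and $c(w)=1$ for every flare vertex $w$, which uses exactly $\lceil m/3\rceil$ colours. Any two vertices at distance at most $2$ are joined by a path with at most one internal vertex, hence rainbow; in $K_m\diamond H_n$ this covers all pairs except $x\in H^{ij}$, $y\in H^{kl}$ with $\{i,j\}\cap\{k,l\}=\emptyset$ (one checks, using adjacency inside $K_m$, that every core–core, core–flare, same-flare, or shared-core-vertex pair has a detour of length at most $2$). For the remaining pair, the four core vertices $u_i,u_j,u_k,u_l$ cannot all receive the same colour under this rule, so there exist $a\in\{i,j\}$ and $b\in\{k,l\}$ with $c(u_a)\neq c(u_b)$, and then $x\,u_a\,u_b\,y$ is a rainbow vertex path. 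Hence $c$ is a rainbow vertex colouring and $rvc(K_m\diamond H_n)\le\lceil m/3\rceil$; combining the bounds (and the $m=3$ case) finishes the proof.

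The main obstacle — really the only non-routine point — is stating and justifying the structural fact cleanly: that every path between two ``opposite'' flares is forced to use, as internal vertices, one core vertex incident to each of the two defining edges. Once this is in hand, both the ``no colour on four core vertices'' lower bound and the correctness of the block colouring follow immediately, and the only remaining care is the (routine) verification that the distance-$\le 2$ argument genuinely exhausts all pairs other than the opposite-flare pairs.
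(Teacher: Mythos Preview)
Your proposal is correct and follows essentially the same approach as the paper: both obtain the lower bound by arguing that four same-coloured core vertices would kill any rainbow path between the two corresponding disjoint-edge flares (pigeonhole then forces $\ge\lceil m/3\rceil$ colours on the core), and both obtain the upper bound by colouring the core so that each colour hits at most three core vertices while colouring every flare vertex $1$. The only cosmetic difference is the explicit core colouring --- you use the block rule $c(u_i)=\lceil i/3\rceil$, whereas the paper uses a cyclic rule $1,2,\dots,\lceil m/3\rceil,1,2,\dots$ --- and your write-up is somewhat more careful in isolating the ``structural fact'' and in verifying that all non-opposite-flare pairs are at distance at most~$2$.
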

\begin{proof}
	Suppose $rvc(K_m \diamond H_n) = \lceil\frac{m}{3}\rceil-1$. This implies that there must be at least four distinct vertices $u,v,s,t$ in the core such that $c(u)=c(v)=c(s)=c(t)$. Consequently, there is no RVP connecting the vertices in the flare are adjacent to vertices $u,v$ with the vertices in the flare adjacent to vertices $s,t$, leading to a contradiction. Therefore, it must be the case that $rvc(K_m \diamond H_n) \geq \lceil\frac{m}{3}\rceil$. Next, we will demonstrate that $rvc(K_m \diamond H_n) \leq \lceil\frac{m}{3}\rceil$ by constructing a vertex coloring as follows. $c:(V(K_m \diamond H_n))\longrightarrow[1,\lceil\frac{m}{3}\rceil]$.
	
	\begin{center}
		$\begin{array}{ccl}
			c(u_i)&=&\left\{
			\begin{array}{ll}
				((i-1)~mod~\lceil\frac{m}{3}\rceil)+1& \hbox{for $i\leq 2(\lceil\frac{m}{3}\rceil)$,}\\
				i~mod~2(\lceil\frac{m}{3}\rceil)& \hbox{for $i >2(\lceil\frac{m}{3}\rceil)$.}
			\end{array}
			\right.
		\end{array}$
	\end{center}
	\begin{center}
		$c(u_{i,i+1}^k)=1$ for $i\in [1,m-1], k\in[1,n]$,\\
		$c(u_{m,1}^k)=1$ for $k\in[1,n]\}$.
	\end{center}
	
Each vertex within the core is adjacent to vertices in two flares, ensuring there is always an RVP connecting any two vertices in the core as well as in flare. Additionally, based on the coloring rule above, a single color is used at most three times in the core, ensuring there is always an RVP connecting any two vertices in the flare, and any two vertices between the core and the flare. Therefore, we have $rvc(K_m \diamond K_n)=\lceil\frac{m}{3}\rceil$.
\end{proof}

\begin{teorema}
	Let $m$ and $n$ be two integers where $m\geq3$, $n\geq 2$, and $m\leq n$. Let $K_m$ be a complete graph of order $m$, and $K_n$ be a complete graph of order $n$. Then, 
	
	\begin{center}
		$\begin{array}{ccl}
			rvcl(K_{m} \diamond K_n)&=&\left\{
			\begin{array}{ll}
				n+1, & \hbox{for $n\geq |E(K_m)|-1$ ;}\\
				n +2, & \hbox{$n <|E(K_m)|-1 $.}\\
			\end{array}
			\right.
		\end{array}$
	\end{center}
\end{teorema}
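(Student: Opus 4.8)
The plan is to prove both directions for each of the two ranges of $n$, following the template already used for $C_m\diamond K_n$ and $P_m\diamond K_n$: first obtain lower bounds from Lemma \ref{lemma1} together with a pigeonhole argument on the colors omitted by the flares, then match them by exhibiting explicit colorings (modular on the core, cyclically shifted on the flares) and checking that every vertex gets a unique rainbow code.

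For the lower bounds, note that the $n$ vertices of any one flare are mutually adjacent and equidistant from every other vertex of $K_m\diamond K_n$, and for $m\geq3$ there are at least two such flares, so Lemma \ref{batasbawah} (equivalently Lemma \ref{lemadjacent}) gives $rvcl(K_m\diamond K_n)\geq n+1$ in all cases. To upgrade this to $rvcl(K_m\diamond K_n)\geq n+2$ when $n<|E(K_m)|-1$, I would assume a locating rainbow $(n+1)$-coloring $c$ exists and derive a contradiction. Each flare induces a $K_n$, so by Lemma \ref{lemma1} its vertices receive $n$ pairwise distinct colors and it omits exactly one of the $n+1$ colors; this range forces $m\geq4$, so $diam(K_m\diamond K_n)=3$ and the distance from a flare $F$ omitting color $\gamma$ to the class $R_\gamma$ is constant over $V(F)$ and lies in $\{1,2,3\}$. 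Two flares with the same omitted color have the same color set, hence any two equally coloured vertices from them agree in every coordinate of the rainbow code except the one indexed by $\gamma$, so those distances to $R_\gamma$ must differ. Since $m\leq n$ there is a color $\beta$ not used on the core, every flare avoiding $\beta$ has color set $[1,n+1]\setminus\{\beta\}$, and at most two such flares can avoid $\beta$ (distinguished only by distance $2$ versus $3$ to $R_\beta$), so at least $|E(K_m)|-2$ flares contain $\beta$; I would then classify those flares by their omitted color and by the distance to the corresponding color class (on an endpoint of the flare's edge, elsewhere in the core, or a flare-only color at distance $2$ or $3$), running the same cascade of cases (i)--(iv) used earlier for cycles, to force two vertices to share a rainbow code — the contradiction. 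For the few smallest instances ($m=4$, and possibly $m=5$) I would, as elsewhere in the paper, display the required colorings in a figure.

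For the upper bounds I would give explicit colorings. When $n\geq|E(K_m)|-1$ the number of colors $n+1$ is at least the number of flares, so I would set $c(u_i)=i$ on the core and choose, injectively over the $j$-th edge, a distinct omitted color $\gamma_j$, colouring the $j$-th flare by the cyclic shift $c(u_{i,j}^k)=((i+k-2)\bmod(n+1))+1$ adapted to $K_m$; when $n<|E(K_m)|-1$ I would use $n+2$ colors, colour the core modulo a suitable parameter, reserve colors $n+1$ and $n+2$ as distinguishing colors, and colour the $j$-th flare again by a cyclic shift so that any two flares are separated either by their color sets or by their distances to $R_{n+1}$ and $R_{n+2}$, exactly mirroring the $n+2$ construction for $C_m\diamond K_n$. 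In each case I would verify, along the lines of the conditions in Theorem \ref{batasumum} and Theorem \ref{Theorempath}: every non-adjacent pair is joined by a rainbow vertex path through the core; core vertices have unique codes; no core vertex shares a code with a flare vertex; vertices inside one flare have unique codes; and vertices in different flares are separated by a differing color set or a differing distance to a reserved class.

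The hardest part will be the lower bound $rvcl(K_m\diamond K_n)\geq n+2$ in the intermediate range, and dually the matching $(n+2)$-colouring when $n$ is only slightly below $|E(K_m)|-1$: there the palette is so tight that core colors must be reused inside flares, which creates genuine collisions between a core vertex $u_c$ and the flare vertex coloured $c(u_c)$ (both can acquire the rainbow code that is $0$ at $c(u_c)$ and $1$ elsewhere), so the counting must be done carefully, grouping flares by omitted color and by distance to that color class and showing the remaining freedom is exhausted precisely when $|E(K_m)|>n+1$. A secondary nuisance is simply bookkeeping the several modular sub-formulas for the flare colorings, as in the $C_m\diamond K_n$ proof, and confirming the boundary cases of the congruences.
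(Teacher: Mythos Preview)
Your outline is broadly workable, but the paper's proof is considerably simpler in both directions and you miss two shortcuts that make the detours you describe unnecessary.

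For the lower bound when $n<|E(K_m)|-1$, the paper does not route through a colour $\beta$ absent from the core and a classification of the remaining flares by distance; it simply observes that there are $|E(K_m)|>n+1$ flares but only $\binom{n+1}{n}=n+1$ possible $n$-element colour sets, so by pigeonhole two flares share the same colour set $A$. The contradiction then comes from a short three-case split on whether the core endpoints of each of these two flares have colours outside $A$ (both do; exactly one pair does; neither does), using $diam(K_m\diamond K_n)=3$ only in the last case. Your proposed ``cascade (i)--(iv) from the cycle case'' would not transfer cleanly, since that argument exploits the cyclic edge order which $K_m$ lacks; the direct pigeonhole avoids this entirely.

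For the upper bound the paper gives no modular formulas: it sets $c(u_i)=i$ on the core (since $m\le n$ there is never any need to reduce core colours modulo anything, contrary to what you suggest) and then, for each flare on edge $u_iu_j$, chooses any set of $n$ colours that \emph{contains both $i$ and $j$}, subject only to the flare colour-sets being pairwise distinct. That inclusion is the key trick you are missing: it forces every flare vertex to be at distance $1$ from exactly $n-1$ colour classes, while every core vertex is at distance $1$ from at least $n$, which instantly separates core from flare; distinct colour-sets then separate different flares. Your cyclic-shift rule $c(u_{i,j}^k)=((i+k-2)\bmod(n+1))+1$ is lifted from the cycle case where the edges are $u_iu_{i+1}$ and does not, as written, cover the $\binom{m}{2}$ edges of $K_m$; an adaptation is possible but superfluous once you use the paper's abstract description.
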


\begin{proof}
	The proof of the lower bound is divided into two cases as follows.
	\begin{enumerate}
		\item $n \geq |E(K_m)|-1$.\\
		Based on Lemma \ref{lemma1}, we have $rvcl(K_m\diamond K_n)\geq n+1$. 
		\item $n < |E(K_m)|-1$. \\
		Suppose $rvcl(K_m \diamond K_n)=n+1$. Since $n < |E(K_m)|-1$, there must be at least two flares whose vertices are colored with the same set of colors, let's call this set of colors $A$. According to Lemma \ref{lemma1}, $A$ contains $n$ distinct colors. Without a loss of generality, let these two flares be adjacent with the edges $uv$ and $xy$. Next, let's consider some possible colorings of these flare.
		\begin{enumerate}
			\item If $|A \cup \{c(u)\} \cup \{c(v)\}|=n+1$ and $|A \cup \{c(x)\} \cup \{c(y)\}|=n+1$, then based on Lemma \ref{batasbawah}, we get $rvcl(K_m\diamond K_n)\geq n+2$.
			\item 
			If $|A \cup \{c(u)\} \cup \{c(v)\}|=n+1$ and $|A \cup \{c(x)\} \cup \{c(y)\}|=n$, then $x$ must be colored with the same color as the colors used in the flare associated with vertices $x$ and $y$. Without a loss of generality, $c(x)=1$. Consequently, $x$ must be connected to a vertex colored with a color not used in the flare. However, this results in vertex $x$ having a representation of $1$ with respect to the colors $\{2, 3, ..., n+1\}$. Therefore, $x$ shares the same rainbow codes with one of the vertices in the flare adjacent with the edge $uv$, which leads to a contradiction. The same argument applies if $|A \cup \{c(u)\} \cup \{c(v)\}|=n$ and $|A \cup \{c(x)\} \cup \{c(y)\}|=n+1$.
			
			\item If $|A \cup \{c(u)\} \cup \{c(v)\}|=n$ and $|A \cup \{c(x)\} \cup \{c(y)\}|=n$, it means there are $n$ pairs of vertices that are colored the same and have entry $1$ with respect to the other $n-1$ color sets. Furthermore, without a loss of generality, assume that the color not included in $A \cup \{c(u)\} \cup \{c(v)\}$ is color $n+1$. In order for the rainbow code to be different, the $n$ pairs of vertices must have distinct entry with respect to $R_{n+1}$.
			
			Since $diam(K_m \diamond K_n)=3$, the possible entry that can be formed are only $2$ and $3$. Consequently, there are at least $n$ vertices that are at a distance of $3$ from $R_{n+1}$. In other words, there are at least $m-2$ other flares whose vertices are colored with the colors $\{1,2,...n\}$. Therefore, there are at least two vertices among these flares that have the same rainbow code, leading to a contradiction.
		\end{enumerate}
		Based on (a), (b), dan (c), we have $rvcl(K_m \diamond K_n) \geq n+2$.
	\end{enumerate}
	Next, we show $rvcl(K_m\diamond K_n)\leq n+1$ for $n\geq |E(K_m)|-1$ and $rvcl(K_m\diamond K_n)\leq n+2$ for $n< |E(K_m)|-1$ by providing the coloring steps on graph $K_m\diamond K_n$ as follows.
	\begin{enumerate}
		\item Assign color $i$ to $c(u_i)$ for $i \in[1,m]$.
		\item Let $A_p$ denote the set of $n$ distinct colors assigned to the flare indexed by $p$ for $p \in [1, |E(K_m)|]$.
		\item 
		After coloring all the vertices in the core, color each flare in such a way that the color set assigned to the vertices $u_{i,j}^k$ contains colors $i$ and $j$ for distinct $i,j\in [1,m]$, $k\in [1,n]$, and $A_p \neq A_q$ for distinct $p,q \in [1,|E(K_m)|]$.
		
	\end{enumerate} 
	
	Based on these coloring steps, it is obtained that every vertex in the core is colored differently, which results in every two non-adjacent vertices in graph $K_m \diamond K_n$ being connected by an RVP where all internal vertices are in the core. 
	
	Next, we show that every vertex generates a unique rainbow code by taking into account the following considerations.
	\begin{enumerate}
		\item All vertices in the core are colored with distinct colors, ensuring that each vertex in the core has a unique code.
		\item The vertices in each flare are assigned $n$ distinct colors, ensuring that every vertex generates a unique rainbow code.
		\item Each flare contains vertices assigned a set of colors where each color is distinct. Therefore, every pair of vertices between the flares has different rainbow codes.
		\item The vertices in the core are adjacent to at least $n$ vertices colored with distinct colors, whereas the vertices in the flares are only adjacent to $n-1$ vertices colored with distinct colors. Therefore, the rainbow code of a vertex in the core will never be the same as that of a vertex in a flare.
	\end{enumerate}
	Since every pair of vertices in $K_m \diamond K_n$ are connected by an RVP and every vertex generates a unique rainbow code, we get $rvcl(K_m \diamond K_n) = n+1$ for $n\geq |E(K_m)|-1$ dan $rvcl(K_m\diamond K_n)= n+2$ for $n< |E(K_m)|-1$.
\end{proof}
\begin{figure}[h]
	\centering
	\caption{A locating rainbow coloring of $K_3 \diamond K_4$}
	\includegraphics[width=3.6in]{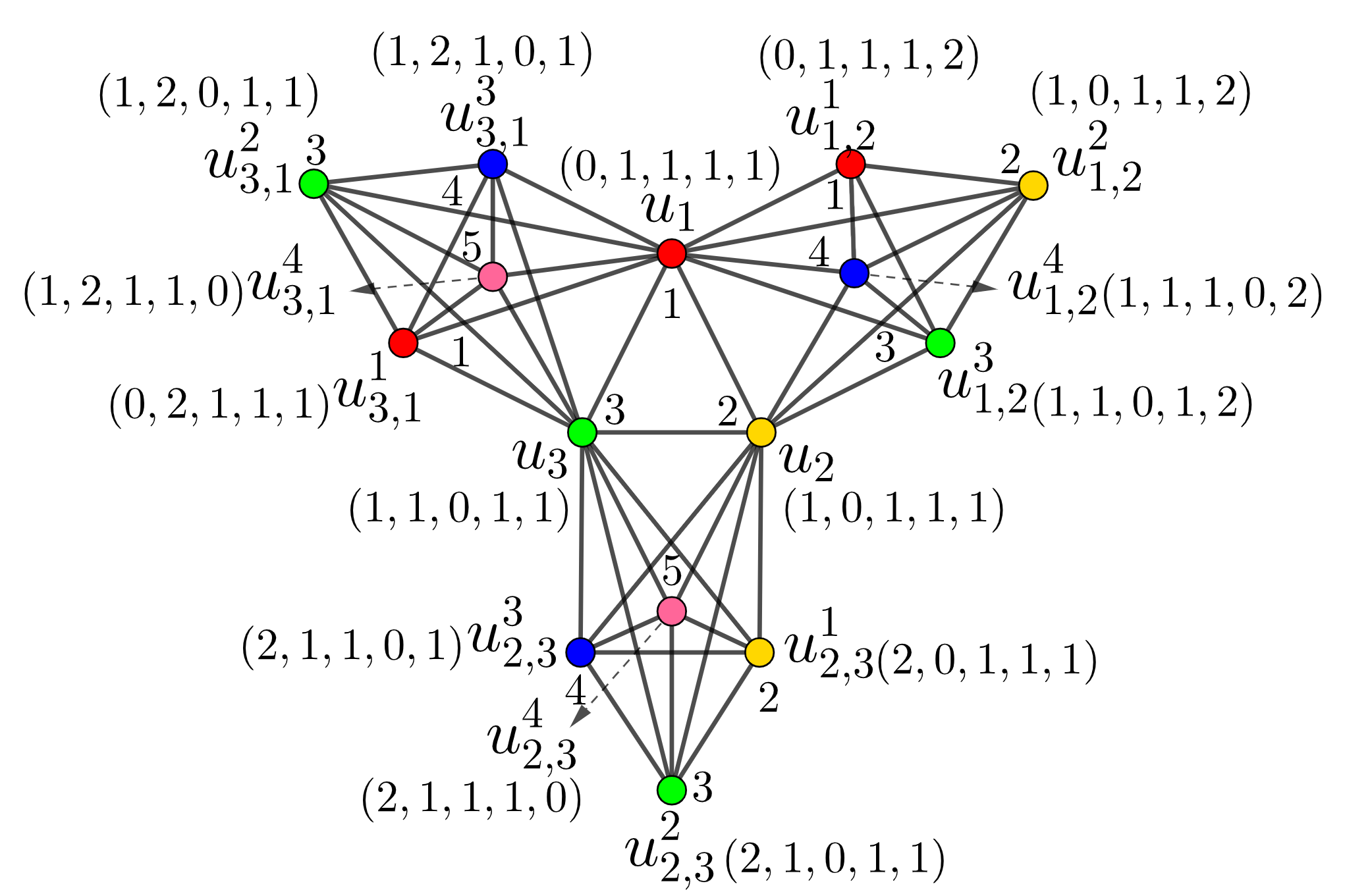}
	\label{fig5}
\end{figure}

Our research results indicate that the upper bounds are tight for any values of $m$ and $n$ where there exists a graph $T_m \diamond K_n$ with a $rvcl$ equal to the upper bound. As for the lower bound, it is satisfied by the graph $C_m \diamond K_n$ for $m=3$ or $m\geq 4$ and $n\geq m-1$, and $m\geq 5$ with $n\leq \lceil\frac{m}{2}\rceil-2$. Therefore, we conclude this study by posing an open problem: Is $max{\lceil\frac{m}{2}\rceil+1, n+2}$ the best lower bound for $rvcl(G_m\diamond K_n)$ for $m\geq 4$ and $\lceil\frac{m}{2}\rceil -1\leq n<m-1$?

\section*{Acknowledgements}

The authors would like to thank the Ministry of Education, Culture, Research, and Technology for supporting this research through the Doctoral Dissertation Research Grant program; LPDP from the Ministry of Finance Indonesia; and Institut Teknologi Bandung for their support in this research. 

\end{document}